\newcommand{\ind}{\mathbf{1}}
\theoremstyle{plain}
\newtheorem{theorem}{Theorem}
\newtheorem{lemma}{Lemma}
\newtheorem{corollary}{Corollary}
\newtheorem{proposition}{Proposition}
\theoremstyle{definition}
\theoremstyle{remark}
\def\p{\mathbb{P}}
\def\e{\mathbb{E}}
\def\ind{\mbox{\rm 1\hspace{-0.04in}I}}
\newcommand{\formula}[2][nolabel]
{\ifthenelse{\equal{#1}{nolabel}}
 {\begin{align*} #2 \end{align*}}
 {\ifthenelse{\equal{#1}{}}
  {\begin{align} #2 \end{align}}
  {\begin{align} \label{#1} #2 \end{align}}
 }
}
\numberwithin{equation}{section}
\title[Density of the supremum of L\'evy processes]
{The asymptotic behavior of the density of the supremum of L\'evy processes}
\author{Lo\"i{}c Chaumont and Jacek Ma{\l}ecki}
\address{Lo\"i{}c Chaumont -- LAREMA UMR CNRS 6093, Universit\'e d'Angers, 2, Bd Lavoisier \\
Angers Cedex 01, 49045, France}
\email{loic.chaumont@univ-angers.fr}
\address{Jacek Ma{\l}ecki  --  Institute of Mathematics and Computer Science, Wroc{\l}aw University of
Technology, Wybrze\.ze Wyspia\'nskiego 27, 50-370 Wroc{\l}aw, Poland.}
\email{jacek.malecki@pwr.wroc.pl}
\keywords{Density, past supremum, asymptotic behaviour, renewal function, conditioning to stay positive, meander.}
\subjclass[2010]{60G51 \and 60J75}
\thanks{Ce travail a b\'en\'efici\'e d'une aide de l'Agence Nationale de la Recherche portant la r\'ef\'erence ANR-09-BLAN-
0084-01. Jacek Ma\l{}ecki was also supported by NCN grant no. 2011/03/D/ST1/00311.}
\begin{document}

\begin{abstract} Let us consider a real L\'evy process $X$ whose transition probabilities are absolutely continuous and have bounded densities. 
Then the law of the past supremum of $X$ before any deterministic time $t$ is absolutely continuous on $(0,\infty)$. We show that its density $f_t(x)$
is continuous on $(0,\infty)$ if and only if the potential density $h'$ of the upward ladder height process is continuous on $(0,\infty)$. Then we prove that $f_t$ 
behaves at 0 as $h'$.  We also describe the asymptotic behaviour of $f_t$, when $t$ tends to infinity. Then some related results are obtained 
for the  density of the meander and this  of the entrance law of the L\'evy process  conditioned to stay positive. 
\end{abstract}

\maketitle

\section{Introduction}

Since the work by Paul L\'evy \cite{le} for standard Brownian motion, the study of the law of the past supremum before a deterministic time 
of real L\'evy processes has given rise to a significant literature. This is mainly justified by the important number of applications of this functional in various 
domains such as risk and queuing theories but properties of its law may also  be useful for theoretical purposes. It is constantly involved in fluctuation 
theory,  for instance. 

Let us denote by $\overline{X}_t=\sup_{s\le t}X_s$ the past supremum at time $t>0$ of the real L\'evy process $X$. Recently in \cite{bib:kmr12} the 
asymptotic behaviour of the distribution function $\p(\overline{X}_t\le x)$  was deeply investigated and in \cite{bib:l12}, necessary and sufficient conditions 
where given for the law of  $\overline{X}_t$ to be absolutely continuous. A natural continuation of both these works consists in a detailed study 
of the density $f_t$ of this law, when it exists. For instance if the transition probabilities of the L\'evy process are absolutely continuous, then the law of the 
past supremum is absolutely continuous on $(0,\infty)$. In this paper, under the additional assumption that the transition densities of the L\'evy process are 
bounded, we show that $f_t$ is continuous at $x\in(0,\infty)$ if and only if  the potential density $h'$ of the upward ladder height process is continuous at this
point. Then, we describe the asymptotic behaviour of the density $f_t(x)$, when $x$ tends to 0.  This behaviour is the same as this of $h'$, up to a constant which 
is given by the tail distribution of the life time of the generic excursion of the L\'evy process reflected at its supremum. We also obtain some asymptotic results and 
estimates for $f_t$, when the time $t$ tends to infinity. We then observe that the behaviour of $f_t(x)$, when $x$ is small is the same as its  behaviour,
when $t$ tends to infinity. Most of the results displayed in this paper extend those obtained by Doney and  Savov in \cite{bib:ds10} for stable L\'evy processes.  

In the next section we recall some elements of excursion and fluctuation theory for L\'evy processes that are necessary for the proof of our main
results. In Section \ref{mainres}, we state the main results and Section \ref{proofs} is devoted to their proofs. The latter section as well as Section 
\ref{mainres} also contain some intermediary results on bridges, meanders and L\'evy processes conditioned to stay positive.

\section{Preliminaries}\label{prelim}
We denote by $\mathcal{D}$  the space of c\`{a}dl\`{a}g paths $\omega:[0,\infty )
\rightarrow \mathbb{R}\cup\{\infty\}$ with lifetime $\zeta(\omega )=\inf \{t\ge0:\omega _{t}=\infty\}$, with the usual convention that 
$\inf\emptyset=+\infty$. The space $\mathcal{D}$ is equipped with the Skorokhod topology,  its Borel $\sigma $-algebra $\mathcal{F}$, and the usual completed 
filtration $(\mathcal{F}_{s},s\geq 0)$ generated by the coordinate process $X=(X_{t},t\geq 0)$ on the space $\mathcal{D}$. We write  $\underline{X}$ and 
$\overline{X}$ for the infimum and supremum processes, that is
\[\underline{X}_{t}=\inf \{X_{s}:0\leq s\leq t\}\;\;\;\mbox{and}\;\;\;
\overline{X}_{t}=\sup \{X_{s}:0\leq s\leq t\}\,.\]
We also define the first passage time by $X$ in the open half line  $(-\infty,0)$ by:
\[\tau_0^-=\inf\{t>0:X_t<0\}\,.\]

We denote by $\mathbb{P}_x$ the law on $(\mathcal{D},\mathcal{F})$ of a L\'{e}vy process starting from $x\in\mathbb{R}$ and we will set 
$\mathbb{P}:=\mathbb{P}_0$. 
Define $X^*:=-X$, then the law of $X^*$ under $\p_x$ will be denoted by $\p^*_x$, that is $(X^*,\p_x)=(X,\p_x^*)$. We recall that the process $(X,\p_x^*)$ is
in duality with  $(X,\p)$, with respect to the Lebesgue measure. In this section, as well as in the biggest part of this paper, we make the following assumptions:
\[\left\{\begin{array}{ll}
(H_1)\;&\mbox{The transition semigroup of $(X,\p)$ is absolutely continuous and}\\
&\mbox{there is a version of its densities, denoted by $x\mapsto p_t(x)$, $x\in\mathbb{R}$, which}\\
&\mbox{are bounded for all $t>0$.}\\
(H_2)\;&\mbox{$(X,\p)$ is not a compound Poisson process and for all $c\ge0$, the process}\\
&\mbox{$((|X_t-ct|,t\ge0),\p)$ is not a subordinator.}
\end{array}\right.\]
Note that $(H_1)$ is equivalent to the apparently stronger condition saying that the characteristic function of $X$ is integrable for all $t>0$. Indeed, boundedness 
of $p_t$ implies that $p_t\in L^2(\mathbb{R})$ and consequently $e^{-t\Psi(\xi)}\in L^{2}(\mathbb{R})$, for all $t>0$ which implies that 
$e^{-t\Psi(\xi)}\in L^{1}(\mathbb{R})$, for all $t>0$.  Conversely, if $e^{-t\Psi(\xi)}\in L^{1}(\mathbb{R})$, for all $t>0$, then by the Riemann-Lebesgue lemma, $p_t\in\mathcal{C}_0(\mathbb{R})$, moreover the function $(t,x)\mapsto p_t(x)$ is jointly continuous on $(0,\infty)\times\mathbb{R}$. From a result in \cite{sh}, 
positivity of the density of the semigroup is ensured by conditions $(H_1)$ and $(H_2)$,  that is,
\begin{equation}\label{pt_positivity}
\mbox{$p_t(x)>0$, for all $t>0$ and $x\in\mathbb{R}$\,.}
\end{equation}
The latter is an essential property for our purpose. Actually compound Poisson processes are excluded here only because our setting is not adapted to their study. 
Note that assumptions $(H_1)$ and $(H_2)$ are  satisfied in many classical cases, such as stable processes or subordinated Brownian motions.

Recall that the reflected process $X-\underline{X}$  is Markovian and that under our assumptions, 0 is always regular for at least one of the half lines 
$(-\infty,0)$ or $(0,\infty)$. When 0 is regular for $(-\infty,0)$ (resp. $(0,\infty)$), we will simply say that $(-\infty,0)$ (resp. $(0,\infty)$) is regular.
If  $(-\infty,0)$ is regular, then its local time at 0 is the unique continuous, increasing, additive functional $L^*$ with $L_0^*=0$, a.s.,  such that the support of the 
measure $dL_t^*$ is the set $\overline{\{t:X_t=\underline{X}_t\}}$ and which is normalized by
\begin{equation}\label{norm1}
\e\left(\int_0^\infty e^{-t}\,dL_t^*\right)=1\,.
\end{equation}
Then the It\^o measure $n^*$ of the excursions away from 0 of the process $X-\underline{X}$ is characterized by the
{\it compensation formula}. More specifically, for any positive and predictable process $F$,
\begin{equation}\label{compensation}\e\left(\sum_{s\in G}F(s,\omega,\epsilon^s)\right)=
\e\left(\int_0^\infty dL_s^*\left(\int_E F(s,\omega,\epsilon)n^*(d\epsilon)\right)\right)\,,
\end{equation}
where $E$ is the set of excursions, $G$ is the set of left end points of the excursions, and $\epsilon^s$ is the excursion  which starts at $s\in G$.
We refer to \cite{bib:b96}, Chap.~IV, \cite {ky}, Chap.~6 and \cite{do} for more detailed definitions and some
constructions of $L^*$ and $n^*$.

When  $(-\infty,0)$ is not regular, the set $\{t:(X-\underline{X})_t=0\}$ is discrete and following
\cite{bib:b96} and \cite{ky}, we define the local time $L^*$ of $X-\underline{X}$ at 0 by
\begin{equation}\label{norm2}
L_t^*=\sum_{k=0}^{R_t}{\rm\bf e}^{(k)}\,,
\end{equation}
where for $t>0$, $R_t=\mbox{Card}\{s\in(0,t]:X_s=\underline{X}_s\}$, $R_0=0$ and ${\rm\bf e}^{(k)}$, $k=0,1,\dots$ is a sequence
of independent and exponentially distributed random variables with parameter
\begin{equation}\label{alpha}
\gamma=\left(1-\e(e^{-\tau^-_0})\right)^{-1}\,.
\end{equation}
In this case, the measure $n^*$ of the excursions away from 0  is proportional to
the distribution of the process $X$ under the law $\mathbb{P}$, killed at its first passage
time in the negative half line. More formally, let us define
$\epsilon^{0}=(X_t\ind_{\{t<\tau_0^-\}}+\infty\cdot\ind_{\{t\ge\tau_0^-\}})$,
then for any bounded Borel functional $K$ on $E$,
\begin{equation}\label{excdisc}
\int_EK(\epsilon)n^*(d\epsilon)=\gamma\,\e[K(\epsilon^{0})]\,.
\end{equation}
From definitions (\ref{norm2}), (\ref{excdisc}) and an application of the strong Markov property, we may check that the normalization 
(\ref{norm1}) and the compensation formula (\ref{compensation}) are still valid in this case.

In any case, $n^*$ is a Markovian measure whose semigroup is this of the killed L\'evy process when it enters in the negative half line.
More specifically, for $x>0$, let us denote by $\mathbb{Q}_{x}^*$ the law of the process
$(X_t\ind_{\{t<\tau_0^-\}}+\infty\cdot\ind_{\{t\ge\tau_0^-\}},\,t\ge0)$ under $\p_x$,
that is for $\Lambda \in \mathcal{F}_{t}$,
\begin{equation}\label{4524}
\mathbb{Q}_{x}^*(\Lambda ,t<\zeta )=\mathbb{P}_{x}(\Lambda ,\,t<\tau_{0}^-)\,.
\end{equation}
Then for all Borel positive functions $f$ and $g$ and for all $s,t>0$,
\begin{equation}\label{4527}
n^*(f(X_t)g(X_{s+t}),s+t<\zeta)=n^*(f(X_t)\e^{\mathbb{Q}^*}_{X_t}(g(X_s)),s<\zeta)\,,
\end{equation}
where $\e_x^{\mathbb{Q}^*}$ means the expectation under $\mathbb{Q}_x^*$. Recall that $\mathbb{Q}_{0}^*$ is well defined when $0$ 
is not regular for $(-\infty,0)$, and in this case, from (\ref{excdisc}), we have $\mathbb{Q}_{0}^*=\gamma^{-1}n^*$.  We define the probability 
measures $\mathbb{Q}_x$ in the same way as in (\ref{4524}) with respect to the dual process $(X,\p^*)$. Let us denote by $q_{t}^*(x,dy)$  
(resp. $q_{t}(x,dy)$)  the semigroup of the strong Markov  process $(X,\mathbb{Q}_x^*)$ (resp. $(X,\mathbb{Q}_x)$). 
Note  that from ($H_1$) and (\ref{4524}), the semigroups $q_t(x,dy)$ and $q_t^*(x,dy)$ are absolutely continuous. 
A slight extension of Lemma 2 in \cite{bib:u11} actually leads to the following result.
\begin{lemma}\label{7855} Under assumptions $(H_1)$ and $(H_2)$, for all $t>0$,  there are versions of the densities of the measures
$q_t(x,dy)$ and $q_t^*(x,dy)$  which are strictly positive and continuous on $(0,\infty)^2$. We denote by $q_t(x,y)$ and 
$q_t^*(x,y)$ these densities. Both $q_t$ and $q_t^*$ satisfy Chapman-Kolmogorov equations and the duality relation,
\begin{equation}\label{5687}
q_t^*(x,y)=q_t(y,x)\,,\;\;\;x,y>0,\,t>0\,.
\end{equation}
\end{lemma}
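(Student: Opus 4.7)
The strategy is to apply the strong Markov property at $\tau_0^-$ to represent $q_t(x,\cdot)$ as a difference between the full semigroup density $p_t$ and a correction term capturing contributions of paths that have already fallen below $0$, and then to read off regularity and positivity of $q_t$ from those of $p_t$. For $x,y>0$ and $t>0$, the strong Markov property gives
\formula{
p_t(y-x)=\p_x(X_t\in dy,\,t<\tau_0^-)/dy+\e_x\!\left[p_{t-\tau_0^-}(y-X_{\tau_0^-})\ind_{\{\tau_0^-\le t\}}\right],
}
so in view of (\ref{4524}) the function
\formula{
q_t(x,y):=p_t(y-x)-\e_x\!\left[p_{t-\tau_0^-}(y-X_{\tau_0^-})\ind_{\{\tau_0^-\le t\}}\right]
}
is a version of the density of $q_t(x,dy)$.

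Joint continuity of $(x,y)\mapsto q_t(x,y)$ on $(0,\infty)^2$ follows from the joint continuity and local boundedness of $(s,z)\mapsto p_s(z)$ on $(0,\infty)\times\mathbb{R}$ (recalled in the preliminaries after $(H_1)$) via dominated convergence in the correction term. Strict positivity of $q_t(x,y)$ is the delicate step; following Uemura's approach, I would iterate the Chapman-Kolmogorov relation along a polygonal path from $x$ to $y$ contained in $(0,\infty)$: subdividing $[0,t]$ into $n$ pieces and choosing intermediate points $z_0=x,z_1,\dots,z_{n-1},z_n=y$, one bounds $q_t(x,y)$ below by the integral of $\prod_{k=1}^n q_{t/n}(z_{k-1},z_k)$ over $(z_1,\dots,z_{n-1})$ in thin tubes around the polygonal path. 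Each factor equals $p_{t/n}(z_k-z_{k-1})$ minus a correction bounded via the formula above in terms of $\p_{z_{k-1}}(\tau_0^-\le t/n)$, which can be made uniformly small for $z_{k-1}$ bounded away from $0$ and $n$ large; combined with the positivity of $p_{t/n}$ coming from (\ref{pt_positivity}), this yields a strictly positive lower bound.

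The Chapman-Kolmogorov equations for $q_t$ and $q_t^*$ follow directly from the Markov property of the killed processes $(X,\mathbb{Q}_x^*)$ and $(X,\mathbb{Q}_x)$. The duality relation (\ref{5687}) is obtained from the classical Hunt-type duality for killed processes, inherited from the Lebesgue duality of $(X,\p)$ and $(X,\p^*)$ since killing upon entering $(-\infty,0)$ is a duality-preserving operation: testing both kernels against bounded continuous functions supported in compacts of $(0,\infty)$ gives the identity almost everywhere, which is then promoted to a pointwise one by the continuity established above. The main obstacle is the positivity assertion, since constructing a positive-probability path from $x$ to $y$ avoiding $(-\infty,0)$ before time $t$ in our general jump setting is the heart of the argument and is precisely where the Uemura-type Chapman-Kolmogorov iteration is indispensable.
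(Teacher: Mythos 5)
The paper does not actually reprove this lemma: its entire proof is the observation that the proof of Lemma 2 in \cite{bib:u11} goes through word for word once one notices that the regularity of the half-lines assumed there is never really used. Your proposal is therefore by necessity a self-contained reconstruction, and two of its three ingredients are sound in outline. The first-passage decomposition $q_t(x,y)=p_t(y-x)-\e_x\bigl[p_{t-\tau_0^-}(y-X_{\tau_0^-})\ind_{\{\tau_0^-\le t\}}\bigr]$ combined with the joint continuity of $(s,z)\mapsto p_s(z)$ is indeed the standard way to produce a continuous version (modulo the routine checks that $\p_x(\underline{X}_t=0)=0$ and $\p_x(\tau_0^-=t)=0$, which you need for the weak continuity in $x$ of the law of $(\tau_0^-,X_{\tau_0^-})$ and for dominated convergence near $s=t$), and the duality relation does follow from Lebesgue duality of the free processes, tested against functions and upgraded to a pointwise identity by continuity.

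The genuine gap is in the positivity argument. Your chaining scheme needs $q_{t/n}(z_{k-1},z_k)\ge p_{t/n}(z_k-z_{k-1})-C\,\p_{z_{k-1}}(\tau_0^-\le t/n)$ with the first term dominating, and this fails on two counts. First, the pointwise bound on the correction by $C\,\p_{z_{k-1}}(\tau_0^-\le t/n)$ requires $\sup_{s\le t/n}\sup_{w\ge\delta}p_s(w)<\infty$, which $(H_1)$ does not provide ($\|p_s\|_\infty$ typically blows up as $s\to0$). Second, and more seriously, even in the integrated (tube) form where the correction really is bounded by $\p_{z_{k-1}}(\tau_0^-\le t/n)$, there is no Gaussian-type lower bound for a general L\'evy process: when $I_k$ is bounded away from $z_{k-1}$, the main term $\p_{z_{k-1}}(X_{t/n}\in I_k)$ can decay super-exponentially in $n$ (e.g.\ when several jumps are needed to bridge the gap), while $\p_{z_{k-1}}(\tau_0^-\le t/n)$ decays only like $t/n$ as soon as the L\'evy measure charges $(-\infty,-z_{k-1})$; so ``main minus correction'' can be negative for every $n$, and the diffusion-style polygonal argument does not transfer. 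A route that does work is to first show that $q_t(x,\cdot)>0$ Lebesgue-a.e.\ on $(0,\infty)$ (this already requires an argument, e.g.\ a decomposition of the path at its infimum, and is precisely the nontrivial content one borrows from the cited lemma), and then upgrade to everywhere positivity via $q_{2t}(x,y)=\int_0^\infty q_t(x,z)q_t(z,y)\,dz$ together with the duality $q_t(z,y)=q_t^*(y,z)$, since the integrand is then a.e.\ strictly positive. As written, your proof of the key assertion is not complete; you should either supply such an argument or, as the authors do, defer to the reference.
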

\begin{proof} It is obtained by following the proof of Lemma 2 in \cite{bib:u11} along the lines. Indeed, the latter result is proved under the 
additional assumptions that both half lines $(-\infty,0)$ and  $(0,\infty)$ are regular. But we can see that these properties are actually not needed, 
although regularity of $(-\infty,0)$ is argued at the beginning of this proof.
\end{proof}
\noindent Let us denote by $q_t^*(dx)$, $t>0$, the entrance law of $n^*$, that is for any positive Borel function $f$,
\begin{equation}\label{2682}
\int_{[0,\infty)}f(x)\,q_t^*(dx)=n^*\left(f(X_t),\,t<\zeta\right)\,.
\end{equation}
The local time at 0 of the reflected process at its supremum $\overline{X}-X=X^*-\underline{X}^*$ and the measure of its excursions away from 0
are defined in the same way as for $X-\underline{X}$. They are respectively denoted by $L$ and $n$.
Then the entrance law $q_t(dx)$ of $n$ is defined in the same way as $q_t^*(dx)$.
\begin{lemma}\label{7854} Under assumptions $(H_1)$ and $(H_2)$ the entrance laws $q_t(dx)$ and $q_t^*(dx)$ are absolutely continuous
on $[0,\infty)$ and there are versions of their densities which are strictly positive and continuous on $(0,\infty)$, for all $t>0$. We denote by 
$q_t(x)$ and  $q_t^*(x)$ these densities. Then both $q_t$ and $q_t^*$ satisfy Chapman-Kolmogorov equations: for $s,t>0$ and $y>0$,
\begin{equation}\label{4528}
q_{s+t}(y)=\int_0^\infty q_s(x)q_{t}(x,y)\,dx\;\;\;\mbox{and}\;\;\;q_{s+t}^*(y)=\int_0^\infty q_s^*(x)q_{t}^*(x,y)\,dx\,.
\end{equation}
\end{lemma}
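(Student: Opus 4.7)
The plan is to realize the density of the entrance law $q_t^*(dy)$ as an integral of the bivariate density $q_{t-s}^*(x,y)$ from Lemma \ref{7855} against $q_s^*(dx)$, and then to transfer continuity, positivity and the Chapman--Kolmogorov identity from the bivariate densities to the entrance laws. By the duality relation \eqref{5687}, the argument for $q_t$ is identical, so it suffices to treat $q_t^*$.

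First, fix $0<s<t$ and a positive Borel function $f$. The Markov property of $n^*$ in the form \eqref{4527}, applied to the splitting $t=s+(t-s)$, combined with the existence of the densities $q_{t-s}^*(x,y)$ provided by Lemma \ref{7855}, gives
\[
q_t^*(f)=n^*\bigl(f(X_t),\,t<\zeta\bigr)=n^*\!\left(\int_0^\infty q_{t-s}^*(X_s,y)f(y)\,dy,\;s<\zeta\right)=\int_{(0,\infty)}\!\!q_s^*(dx)\int_0^\infty q_{t-s}^*(x,y)f(y)\,dy.
\]
From \eqref{4524} we read off the pointwise bound $q_{t-s}^*(x,y)\le p_{t-s}(y-x)\le\|p_{t-s}\|_\infty$, finite by $(H_1)$. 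Together with the finiteness $q_s^*([0,\infty))=n^*(s<\zeta)<\infty$ for every $s>0$ (a standard consequence of the $\sigma$-finiteness of $n^*$; in the case where $(-\infty,0)$ is not regular, it is immediate from \eqref{excdisc}), Fubini's theorem gives
\[
q_t^*(f)=\int_0^\infty f(y)\left(\int_{(0,\infty)}q_{t-s}^*(x,y)\,q_s^*(dx)\right)dy,
\]
so $q_t^*(dy)$ is absolutely continuous with density
\[
q_t^*(y):=\int_{(0,\infty)}q_{t-s}^*(x,y)\,q_s^*(dx).
\]

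Continuity and strict positivity on $(0,\infty)$ are then immediate from Lemma \ref{7855}: the joint continuity of $q_{t-s}^*(\cdot,\cdot)$ passes through the integral by dominated convergence with dominating constant $\|p_{t-s}\|_\infty$, which is integrable against the finite measure $q_s^*(dx)$; and strict positivity of $q_{t-s}^*(x,y)$ on $(0,\infty)^2$, combined with the non-triviality of $q_s^*$ on $(0,\infty)$, forces $q_t^*(y)>0$ for every $y>0$. The Chapman--Kolmogorov equation is then a direct rewriting of \eqref{4527}: substituting the newly obtained density $q_s^*(dx)=q_s^*(x)\,dx$ into the identity displayed above yields, for $s,t,y>0$,
\[
q_{s+t}^*(y)=\int_0^\infty q_s^*(x)\,q_t^*(x,y)\,dx.
\]

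The main technical point I expect is the verification that $n^*(s<\zeta)<\infty$ for every $s>0$, needed to apply Fubini and to have a usable dominating function; this is where one separates the regular and non-regular cases, the non-regular case being trivial from \eqref{excdisc} and the regular case relying on the standard excursion-theoretic fact that the L\'evy measure of the inverse local time, given by $n^*(\zeta\in\cdot)$, is finite on every interval bounded away from the origin. Once this is in place, the statement reduces to transporting the bivariate properties of Lemma \ref{7855} through a single integral.
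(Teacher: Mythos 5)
Your proof is correct and follows essentially the same route as the paper's: both realize a good version of the entrance-law density as $\int q^*_{u}(x,y)\,q^*_s(dx)$ for a splitting of the time interval, and both transfer continuity and strict positivity from Lemma \ref{7855} by dominated convergence, using the uniform bound $q^*_u(x,y)\le \sup_z p_u(z)$ together with the finiteness of the total mass $n^*(s<\zeta)$. The only difference is cosmetic: the paper cites part 3 of Lemma 1 in \cite{bib:l12} for the absolute continuity and then smooths an arbitrary density version $h_t$, whereas you rederive the absolute continuity via the Markov property and Fubini (for which you should also observe that $q^*_s(\{0\})=0$, immediate from $(H_1)$ and \eqref{4527}, so that integrating over $(0,\infty)$ indeed captures all of the mass of the entrance law).
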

\begin{proof} It suffices to prove the result for $q_t(dx)$.  It is proved in part 3.~of Lemma 1 in \cite{bib:l12}, that under assumption ($H_1$), 
the measure $q_t(dx)$ is absolutely continuous with respect to the Lebesgue measure on $[0,\infty)$.  Let $h_t$ be any version of its density 
and for all $s>0$ and $y>0$, define
\begin{equation}\label{9353}
q_{s,t}(y)=\int_0^\infty h_t(x)q_s(x,y)\,dx\,.
\end{equation}
We derive from $(H_1)$ and (\ref{4524}) (for the dual process) that $q_t(x,y)$ is uniformly bounded in $x,y\in(0,\infty)$. Moreover, from 
(\ref{2682}), (\ref{norm1}) and (\ref{compensation}), $\int_0^\infty h_t(x)\,dx=n(t<\zeta)<\infty$. Then from the Lebesgue dominated convergence theorem 
and Lemma \ref{7855},  relation (\ref{9353}) defines a continuous and strictly positive function on $(0,\infty)$. Moreover, from  
(\ref{4527}) we see that $q_{s,t}(x)$ is a density for $q_{t+s}(dx)$. Hence it only depends on $t+s$. Let us set $q_{s,t}(x)=q_{t+s}(x).$

Proceeding this way for all $s,t>0$, we define a family of strictly positive and continuous densities $q_t(x)$, $t>0$ of the entrance 
law of $n$ which satisfies the Chapman-Kolmogorov equations $q_{t+s}(y)=\int_0^\infty q_t(x)q_s(x,y)\,dx$, $x>0$, $s,t>0$.
\end{proof}
We end this section with the definition of the ladder processes. The ladder time processes $\tau$ and $\tau^*$, and the ladder height processes 
$H$ and $H^*$ are the following (possibly killed) subordinators:
\[\tau_t=\inf\{s:L_s>t\}\,,\;\;\tau^*_t=\inf\{s:L_s^*>t\}\,,\;\;H_t=X_{\tau_t}\,,\;\;H^*_t=-X_{\tau_t^*}\,,\;\;t\ge0\,,\]
where $\tau_t=H_t=+\infty$, for $t\ge\zeta(\tau)=\zeta(H)$ and $\tau_t^*=H_t^*=+\infty$, for $t\ge\zeta(\tau^*)=\zeta(H^*)$.
We denote by $\kappa$ and $\kappa^*$ the characteristic exponents  of the ladder processes $(\tau,H)$ and $(\tau^*,H^*)$. 
Recall that the drifts  ${\tt d}$ and ${\tt d}^*$  of the subordinators $\tau$ and $\tau^*$ satisfy
\begin{equation}\label{delta}
\int_0^t\ind_{\{X_s=\overline{X}_s\}}\,ds={\tt d}L_t\,,\;\;\;\int_0^t\ind_{\{X_s=\underline{X}_s\}}\,ds={\tt d}^*L_t^*
\end{equation}
and that ${\tt d}>0$ if and only if $(-\infty,0)$ is not regular.  In any case, we can check that ${\tt d}=\gamma^{-1}$, see \cite{bib:l12}.

\section{Main results}\label{mainres}

In all this section,  $(X,\p)$ is any L\'evy process satisfying assumptions ($H_1$) and ($H_2$). 
Then from Corollary 3 of \cite{bib:l12}, the law of the past supremum $\overline{X}_t$ on $[0,\infty)$ takes the following form,
\begin{equation}\label{5520}
\p(\overline{X}_t\in dx)=\int_0^t n(t-s<\zeta)q_s^*(x)\,ds\,dx+ {\tt d}q_t^*(x)\,dx+{\tt d}^*n(t<\zeta)\delta_{\{0\}}(dx)\,.
\end{equation}
Expression (\ref{5520}) shows that the law of $\overline{X}_t$ is absolutely continuous with respect to the Lebesgue measure on $(0,\infty)$. 
Moreover, this law has   an atom at 0 if and only if  $(0,\infty)$ is not regular. Then we will denote by $f_t(x)$ the following version of the density of 
$\p(\overline{X}_t\in dx)$ on $(0,\infty)$, 
\begin{equation}\label{5519}
f_t(x)=\int_0^t n(t-s<\zeta)q_s^*(x)\,ds+ {\tt d}q_t^*(x)\,,\;\;\;x>0\,.
\end{equation}
Note that there are instances where the law of $\overline{X}_t$ is absolutely continuous whereas assumption ($H_1$)  is not satisfied, 
see part 1 of Corollary 2 in \cite{bib:l12}. Expression $(\ref{5519})$ will be the starting point of our study. As the latter shows regularity properties of $f_t$,
such as continuity or asymptotic behaviour at 0, relate to those of $q_t^*$.  However, due to the 'bad' behaviour of the function $(t,x)\mapsto q_t^*(x)$, when 
$t$ and $x$ are small, some features of the first term on the right hand side of (\ref{5519}) cannot be directly derived from those of $q_t^*$. 
This study requires much sharper arguments which will be developed in the next section.\\

The next proposition extends Lemma 3 in \cite{bib:u11}. It describes the asymptotic behaviour at 0 of the functions $x\mapsto q_t^*(x,y)$ and 
$x\mapsto q_t^*(x)$. The second assertion is to be compared with 
Propositions 6 and 7 in \cite{dr} where similar results are obtained in the case where the law of $X$ is in the domain of attraction of a stable law.
\begin{proposition}\label{5012} For all $t>0$, 
\[\lim_{x\rightarrow0+}\frac{q_t^*(x,y)}{h^*(x)}=q_t^*(y)\,,\;\,\mbox{for all $y>0$ and}\;\;\;\lim_{x\rightarrow0+}\frac{q_t^*(x)}{h(x)}=\frac{p_t(0)}t\,,\]
where $h$ and $h^*$ are the renewal functions of the ladder height processes $H$ and $H^*$, that is $h(x)=\int_0^\infty\p(H_t\le x)\,dt$ and 
$h^*(x)=\int_0^\infty\p(H_t^*\le x)\,dt$, $x\ge0$.
\end{proposition}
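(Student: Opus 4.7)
The plan is to treat the two limits separately. The first limit is a mild extension of Lemma 3 in \cite{bib:u11} to our slightly weaker setting, and its proof follows a Doob $h^*$-transform combined with Chapman-Kolmogorov. The second limit requires a new ingredient, namely the first limit applied to the dual process together with a classical path-decomposition identity.

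For the first limit, I would follow the proof of Lemma 3 in \cite{bib:u11}, which establishes the analogous statement under the stronger assumption that both $(-\infty,0)$ and $(0,\infty)$ are regular. As observed in the proof of Lemma \ref{7855} above, that argument does not actually use the full regularity assumption and works verbatim under $(H_1)$--$(H_2)$. The underlying mechanism is that $y\mapsto h^*(y)q_t^*(x,y)/h^*(x)$ is the transition density of the Doob $h^*$-transform of $(X,\mathbb{Q}^*)$, i.e.\ of $(X,\mathbb{P})$ conditioned to stay positive starting from $x$; letting $x\to 0+$ yields the entrance law $h^*(y)q_t^*(y)$ of this conditioned process, which gives the stated limit. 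The Chapman-Kolmogorov decomposition $q_t^*(x,y)=\int q_{t/2}^*(x,z)\,q_{t/2}^*(z,y)\,dz$ combined with the joint continuity of $q_t^*$ on $(0,\infty)^2$ from Lemma \ref{7855} provides the uniform integrability needed to pass to the limit inside the integral.

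For the second limit, the idea is to bootstrap the first limit via duality. The passage $X\mapsto -X$ exchanges $q_t$ with $q_t^*$ and $h$ with $h^*$, so the first limit applied to $(X,\mathbb{P}^*)$ reads $\lim_{y\to 0+}q_s(y,z)/h(y)=q_s(z)$ for every $z>0$. Combining the Chapman-Kolmogorov equation (\ref{4528}) with the duality relation (\ref{5687}) yields, for $0<s<t$,
\[
q_t^*(y)=\int_0^\infty q_{t-s}^*(z)\,q_s^*(z,y)\,dz=\int_0^\infty q_{t-s}^*(z)\,q_s(y,z)\,dz.
\]
Dividing by $h(y)$ and passing to the limit $y\to 0+$ by dominated convergence (justified by the uniform bound $q_s(y,z)\le\|p_s\|_\infty$ coming from $(H_1)$ and (\ref{4524}), together with a non-sharp form of the first limit) gives
\[
\lim_{y\to 0+}\frac{q_t^*(y)}{h(y)}=\int_0^\infty q_{t-s}^*(z)\,q_s(z)\,dz,
\]
and crucially the right-hand side is independent of $s\in(0,t)$.

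To identify this common value, I would invoke the bridge-type identity
\[
p_t(0)=\int_0^t\!ds\int_0^\infty q_{t-s}^*(z)\,q_s(z)\,dz,
\]
which follows from a path decomposition of $X$ on $[0,t]$ at its time of global minimum together with the compensation formula (\ref{compensation}). Integrating the preceding display over $s\in(0,t)$ and comparing then forces $\lim_{y\to 0+}q_t^*(y)/h(y)=p_t(0)/t$, as claimed. The main obstacle is the dominated-convergence step: since $h(y)\to 0$ as $y\to 0+$, the pointwise convergence $q_s(y,z)/h(y)\to q_s(z)$ does not come with an obvious uniform envelope in $z$, and securing one requires a careful split of the $z$-integral into small and large regimes, using the uniform bound on $q_s(y,z)$ from $(H_1)$ for the large-$z$ part and the joint continuity of $q_s$ from Lemma \ref{7855} for the small-$z$ part.
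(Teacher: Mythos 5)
Your overall architecture matches the paper's: both limits are obtained by transporting Lemma 3 of \cite{bib:u11} into the present setting via the identities $p_t^{\uparrow}(x,y)=h^*(y)q_t^*(x,y)/h^*(x)$ and $p_t^{\uparrow}(y)=h^*(y)q_t^*(y)$, and the second limit reduces to identifying the constant $\int_0^\infty q^*_{t-s}(z)q_s(z)\,dz$. Your identification of that constant is a genuinely different route from the paper's: you use the $s$-independence of the integral (inherited from the limit identity) together with the normalization $p_t(0)=\int_0^t ds\int_0^\infty q^*_{t-s}(z)q_s(z)\,dz$, whereas the paper invokes Knight's theorem that the time of the supremum of the bridge from $0$ to $0$ is uniform on $[0,t]$, combined with the explicit density of that time (Proposition \ref{8432}). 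Both rest on the same path decomposition at the supremum (Theorem 3 of \cite{bib:l12}); your version in effect re-derives the uniform law instead of quoting it, which is slightly more self-contained, at the price of having to justify the normalization identity carefully (one must check that the atoms at $s=0$ and $s=t$ in the decomposition do not contribute when conditioning on $X_t=0$).

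The genuine gap is the claim that the proof of Lemma 3 in \cite{bib:u11} ``works verbatim'' under $(H_1)$--$(H_2)$, by analogy with the remark made in the proof of Lemma \ref{7855}. That remark concerns Lemma 2 of \cite{bib:u11}, not Lemma 3. The key ingredient of Lemma 3 of \cite{bib:u11} is the convergence $\lim_{x\to0}\e^{\uparrow}_x\bigl(h^*(X_t)^{-1}f(X_t)\bigr)=n^*(f(X_t),t<\zeta)$, i.e.\ Corollary 1 of \cite{bib:cd05}, and that result is proved in the literature only when $(0,\infty)$ is regular; its proof genuinely uses regularity. Extending it to the case where one of the half lines is not regular is precisely the content of Proposition \ref{7632}, whose proof is not a routine adaptation: one must first establish $h^{(\varepsilon)}(x):=\e\bigl(\int_0^\infty e^{-\varepsilon s}\ind_{\{\underline{X}_s\ge -x\}}\,dL_s^*\bigr)\sim h^*(x)$ as $x\to0$, then deduce from the compensation formula that the measures $h^*(x)^{-1}\p_x(\tau_0^->s)\,ds$ converge weakly to ${\tt d}^*\delta_0(ds)+n^*(s<\zeta)\,ds$, and finally upgrade this to the pointwise limit $\lim_{x\to0}h^*(x)^{-1}\p_x(\tau_0^->t)=n^*(t<\zeta)$. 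Without this step your argument covers only the case where both half lines are regular, which excludes, e.g., bounded variation processes with nonzero drift. Relatedly, the dominated-convergence difficulty you flag at the end is real but is subsumed by the same missing ingredient: once the extended Corollary 1 is in hand, one follows Uribe's proof rather than constructing a direct envelope for $q_s(y,z)/h(y)$.
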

\noindent In general, the function $h$ is finite, continuous, increasing and $h-h(0)$ is subadditive on $[0,\infty)$.
Moreover, $h(0) = 0$ if $(-\infty,0)$ is regular  and $h(0) = {\tt d}$ if not. 
This function is known explicitly only in the following cases: when $X$ has no positive jumps, $H$ is a pure drift. More specifically, 
given our normalisation of the local time $L$, one has $H_t=ct$, where $c=\Phi(1)$ and $\Phi$ is the Laplace exponent of the subordinator
$T_x=\inf\{t:X_t>x\}$, $x\ge0$,  so that $h(x)=c^{-1}x$. When $X$ is a stable process with index $\alpha\in(0,2]$ and positivity coefficient 
$\p(X_1>0)=\rho$, then $H$ is a stable subordinator with index $\alpha\rho$, and $h(x)=\e(H_1^{-\alpha\rho})x^{\alpha\rho}$. 
Finally, when the characteristic exponent of $X$ is of the form $\Psi(\xi)=\psi(\xi^2)$ for a complete  Bernstein function $\psi$, then $h(x)$ is a Bernstein 
function and its integral representation in terms of $\psi(\xi)$ was given in Proposition 4.5 in \cite{bib:kmr12}.\\

Recall from (1.8)  and (3.3) in \cite{si}, see also parts 2 and 3 of Lemma 1 in \cite{bib:l12}, that 
the renewal function $h$ of the upward ladder process $H$ is everywhere differentiable and that its derivative is given by
\begin{equation}\label{9421}
h'(x)=\int_0^\infty q_s^*(x)\,ds\,,\;\;\;\mbox{for all $x>0$.}
\end{equation}
Moreover Lemma \ref{7854} ensures that
\[h'(x)>0\,,\;\;\;\mbox{for all $x>0$.}\]
\noindent Knowing that $x\mapsto q_t^*(x)$ is continuous on $(0,\infty)$ and considering the representation (\ref{5519}), it is natural to ask about
continuity of $f_t$. 
\begin{proposition}\label{thm:continuity}
   The following conditions are equivalent$:$
   \begin{enumerate}
      \item[(1)] $x\to h'(x)$ is continuous at $x_0>0$,
      \item[(2)] $x\to f_t(x)$ is continuous at $x_0>0$ for every $t>0$,
      \item[(3)] $x\to f_t(x)$ is continuous at $x_0>0$ for some $t>0$.
   \end{enumerate}
\end{proposition}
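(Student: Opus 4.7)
The implication $(2)\Rightarrow(3)$ is immediate, so I would focus on $(1)\Rightarrow(2)$ and $(3)\Rightarrow(1)$. Both directions rest on three ingredients: the representation (\ref{5519}) of $f_t$, the identity $h'(x)=\int_0^\infty q_s^*(x)\,ds$ from (\ref{9421}), and the pointwise continuity of $x\mapsto q_s^*(x)$ granted by Lemma \ref{7854}. The workhorse is \emph{Scheff\'e's theorem}: for nonnegative $g_n\to g$ pointwise, $\int g_n\to\int g$ implies $\int|g_n-g|\to 0$.

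For $(1)\Rightarrow(2)$, fix $x_n\to x_0$. Combining pointwise continuity of $q_s^*$ with the convergence $h'(x_n)\to h'(x_0)$, Scheff\'e yields the $L^1([0,\infty))$-convergence $\int_0^\infty|q_s^*(x_n)-q_s^*(x_0)|\,ds\to 0$. The term ${\tt d}\,q_t^*(x)$ in (\ref{5519}) is continuous in $x$ by Lemma \ref{7854}. For the convolution term I would split the $s$-range at $\delta\in(0,t/4)$: on $[0,t-\delta]$ the factor $n(t-s<\zeta)\le n(\delta<\zeta)<\infty$, so the contribution converges by the $L^1$-convergence above. On $[t-\delta,t]$ I invoke the Chapman--Kolmogorov identity (\ref{4528}) in the form $q_s^*(x)=\int_0^\infty q_{t/2}^*(y)\,q_{s-t/2}^*(y,x)\,dy$; combined with $q_u^*(y,x)\le\|p_u\|_\infty$ this gives the uniform-in-$x$ bound $q_s^*(x)\le n(t/2<\zeta)\sup_{u\in[t/4,t/2]}\|p_u\|_\infty$, so the residual piece is majorized by a constant multiple of $\int_0^\delta n(u<\zeta)\,du$. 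This last integral is finite (since $f_t(y_0)<\infty$ and $s\mapsto q_s^*(y_0)$ is continuous and strictly positive on $[t-\delta,t]$ for any fixed $y_0>0$) and vanishes with $\delta$.

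For $(3)\Rightarrow(1)$, assume $f_t$ is continuous at $x_0$ for some $t>0$ and pick $x_n\to x_0$. After subtracting the continuous term ${\tt d}\,q_t^*$ (Lemma \ref{7854}) one obtains $\int_0^t n(t-s<\zeta)\,q_s^*(x_n)\,ds\to\int_0^t n(t-s<\zeta)\,q_s^*(x_0)\,ds$. Pointwise continuity of $q_s^*$ plus Scheff\'e on $[0,t]$ yields $L^1$-convergence of the weighted integrands, and the positive lower bound $n(t-s<\zeta)\ge n(t<\zeta)$ allows us to divide it out, giving $\int_0^t q_s^*(x_n)\,ds\to\int_0^t q_s^*(x_0)\,ds$.

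The main obstacle, in my view, is to upgrade this partial convergence to $h'(x_n)\to h'(x_0)$. My plan is to handle the tail through Chapman--Kolmogorov once more,
\[
\int_t^\infty q_s^*(x)\,ds=\int_0^\infty q_{t/2}^*(y)\Bigl(\int_{t/2}^\infty q_u^*(y,x)\,du\Bigr)dy=\int_0^\infty q_{t/2}^*(y)\,V(y,x)\,dy,
\]
and to prove continuity of the right-hand side at $x_0$ by dominated convergence. For each $y>0$, $x\mapsto V(y,x)$ is continuous at $x_0$ by pointwise continuity of $q_u^*(y,\cdot)$ combined with the bound $q_u^*(y,x)\le\|p_u\|_\infty$. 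The dominating function needed for the outer integral is furnished by the Green function $U^*(y,x)$ of $X$ killed at $\tau_0^-$, which majorizes $V(y,x)$ and admits, via Silverstein's formulae (\cite{si}), a representation through the renewal densities $h$ and $h^*$; this makes it finite and locally bounded on $(0,\infty)^2$. Since $q_{t/2}^*(y)\,dy$ is a finite measure with total mass $n(t/2<\zeta)$, dominated convergence closes the argument and yields $h'(x_n)\to h'(x_0)$.
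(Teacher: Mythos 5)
Your overall architecture is sound and genuinely different from the paper's: you drive both nontrivial implications with Scheff\'e's theorem, whereas the paper works with an explicit $\varepsilon$-splitting of the time integral near $s=0$ and reduces everything to continuity of $x\mapsto\int_{t_0}^\infty q_s^*(x)\,ds$. Your $(1)\Rightarrow(2)$ is essentially correct and arguably cleaner: Scheff\'e converts $h'(x_n)\to h'(x_0)$ plus pointwise continuity of $q_s^*$ (Lemma \ref{7854}) into $L^1$-convergence on $(0,\infty)$, and your uniform bound on $q_s^*(x)$ for $s\in[t-\delta,t]$ via Chapman--Kolmogorov is fine (modulo writing $n^*(t/2<\zeta)$ rather than $n(t/2<\zeta)$, and noting that the finiteness of $\int_0^\delta n(u<\zeta)\,du$ is most easily seen from $\int_0^t n(t-s<\zeta)n^*(s<\zeta)\,ds\le 1$ and $n^*(s<\zeta)\ge n^*(t<\zeta)>0$, rather than from continuity of $s\mapsto q_s^*(y_0)$, which is not established in the paper).

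The gap is in the last step of $(3)\Rightarrow(1)$, i.e.\ the continuity at $x_0$ of the tail $x\mapsto\int_t^\infty q_s^*(x)\,ds$. Your proposed dominating function is the Green function $U^*(y,x)$ of the killed process, claimed to be ``finite and locally bounded on $(0,\infty)^2$'' via Silverstein's representation. Two problems. First, local boundedness on compacts of $(0,\infty)^2$ is not enough: the outer integral $\int_0^\infty q_{t/2}^*(y)\,\sup_{|x-x_0|<\eta}U^*(y,x)\,dy$ runs over all $y>0$, so you need control of $U^*(y,\cdot)$ as $y\downarrow0$ and $y\uparrow\infty$ against the (infinite-support) measure $q_{t/2}^*(y)\,dy$. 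Second, Silverstein's formula expresses $U^*$ as a convolution of the renewal measures, so pointwise finiteness and local boundedness of $U^*$ essentially require local boundedness of $h'$ itself --- which is not available a priori (indeed the whole point of the proposition is that $h'$ need not even be continuous). The paper avoids this entirely through its key estimate (\ref{qth:integrability}), namely $q_s^*(x)\le 3(e/(e-1))^2\,h(x)\,p^S_{s/3}(0)/s$, obtained from Chapman--Kolmogorov, the bound (\ref{sup:kmr:estimate}) from \cite{bib:kmr12}, and the Wiener--Hopf identity $\kappa(1/t,0)\kappa^*(1/t,0)=1/t$; combined with $\int^\infty p_t(0)t^{-1}dt<\infty$ from (\ref{ptt:infinity}) this gives a dominating function $C\,h(x)\,p^S_{s/3}(0)/s$ on $[t,\infty)$ and hence continuity of the tail by dominated convergence. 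Your Chapman--Kolmogorov decomposition would close the same way if you replaced the Green-function domination by the two-sided analogue (\ref{qt:upperbound}), $q_{3u}^*(y,x)\le C\,h^*(y)h(x)\,p_u^S(0)/u$, since $\int_0^\infty q_{t/2}^*(y)h^*(y)\,dy=1$; but as written the domination step does not go through.
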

\noindent The function $h'$ is known to be continuous on $(0,\infty)$ in many instances. We have already seen that it is the case when $X$ is a stable 
process. It is also continuous when the process has no positive jumps, but more generally if the ascending ladder height process $H$ has a positive 
drift, then $h'$ is continuous and bounded, see Theorem 19, Section VI.4 in \cite{bib:b96}.
Continuity of $h'$ can be also deduced from Proposition 4.5 in \cite{bib:kmr12} for a wide class of subordinated Brownian motions.
Actually, this function is not always continuous, see for instance Lemma 2.4 in \cite{kkr}, where it is proved that if $X$ has no positive jumps, bounded
variations and a L\'evy measure which admits atoms, then $h'$ is not continuous. \\

\noindent Then a subsequent question concerns the asymptotic behaviour of $f_t$ at 0, for which we have the following result.

\begin{theorem}\label{4883} The  density of the law of the past supremum of $(X,\p)$ fulfills the following asymptotic behaviour,
\formula{
\lim_{x\to 0^{+}}\frac{f_t(x)}{h'(x)} =  n(t<\zeta)\,,
}
uniformly on $[t_0,\infty)$ for every fixed $t_0>0$.
\end{theorem}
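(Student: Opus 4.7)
The plan starts from the decomposition obtained by subtracting $n(t<\zeta)\,h'(x)$ from (\ref{5519}) using (\ref{9421}):
\[
f_t(x) - n(t<\zeta)\,h'(x) = R_1(x,t) + R_2(x,t) - R_3(x,t),
\]
where
\[
R_1(x,t) = \int_0^t [n(t-s<\zeta) - n(t<\zeta)]\,q_s^*(x)\,ds,\quad R_2(x,t) = {\tt d}\,q_t^*(x),\quad R_3(x,t) = n(t<\zeta)\int_t^\infty q_s^*(x)\,ds.
\]
All three terms are nonnegative, so it suffices to show each is $o(h'(x))$ as $x\to 0^+$, uniformly in $t\ge t_0$.

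A first and crucial ingredient is the preliminary estimate $h(x) = o(h'(x))$. By (\ref{9421}), Fatou's lemma, and the second limit in Proposition \ref{5012},
\[
\liminf_{x\to 0^+} \frac{h'(x)}{h(x)} \ge \int_0^\delta \frac{p_s(0)}{s}\,ds
\]
for any $\delta>0$; the right-hand side is $+\infty$ because $p_s(0)$ is positive and continuous on $(0,\infty)$ (by (\ref{pt_positivity})), hence bounded below on any interval $[a,\delta]$, whence $\int_a^\delta p_s(0)/s\,ds$ diverges as $a\to 0^+$.

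The terms $R_2$ and $R_3$ are handled via the Chapman--Kolmogorov equations of Lemma \ref{7854} together with the duality $q_{s_0}^*(y,x)=q_{s_0}(x,y)$ from Lemma \ref{7855}. Fixing $s_0\in(0,t_0)$ and decomposing
\[
q_t^*(x) = \int_0^\infty q_{t-s_0}^*(y)\,q_{s_0}(x,y)\,dy,\qquad \int_t^\infty q_s^*(x)\,ds = \int_0^\infty q_{s_0}(x,y)\,G_t(y)\,dy,
\]
with $G_t(y) := \int_{t-s_0}^\infty q_u^*(y)\,du \le h'(y)$, one applies the dual of Proposition \ref{5012}, namely $q_{s_0}(x,y)/h(x) \to q_{s_0}(y)$ as $x\to 0^+$. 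Dominated convergence (using boundedness of the semigroup densities from Lemma \ref{7855} and the finiteness of $\int q_{t-s_0}^*(y)\,dy = n^*(t-s_0<\zeta)$) then yields $q_t^*(x) = O(h(x))$ and $\int_t^\infty q_s^*(x)\,ds = O(h(x))$, with constants uniform in $t\ge t_0$. Combined with the preliminary, this gives $R_2,R_3 = o(h'(x))$ uniformly.

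The main obstacle is $R_1$. Split at a small parameter $\gamma > 0$. On $s\in[0,\gamma]$, the bound $n(t-s<\zeta) - n(t<\zeta) \le \omega(\gamma) := \sup_{t\ge t_0}[n(t-\gamma<\zeta) - n(t<\zeta)]$ yields a contribution $\le \omega(\gamma)\,h'(x)$; the modulus $\omega(\gamma)\to 0$ as $\gamma\to 0^+$ by the uniform continuity on $[t_0/2,\infty)$ of the bounded, monotone, continuous function $u\mapsto n(u<\zeta)$ (continuity following from the absence of atoms in $n(\zeta\in\cdot)$, uniform continuity from the existence of a finite limit at infinity). On $s\in[\gamma,t]$, I invoke the uniform bound $q_s^*(x)\le C_\gamma\,h(x)$ from the previous step together with Fubini:
\[
\int_\gamma^t [n(t-s<\zeta)-n(t<\zeta)]\,ds = \int_0^t \bigl(r\wedge(t-\gamma)\bigr)\,n(\zeta\in dr),
\]
reducing matters to estimating this integral uniformly in $t\ge t_0$ using $\int_0^1 r\,n(\zeta\in dr)<\infty$ (L\'evy-measure-of-a-subordinator integrability) together with the decay of $n(\zeta\in\cdot)$ at infinity. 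Letting $\gamma\to 0^+$ after $x\to 0^+$ then gives $R_1 = o(h'(x))$ uniformly in $t\ge t_0$. The chief technical difficulty is securing all these uniformities in $t$ simultaneously, especially in the case $n(\zeta=\infty)>0$.
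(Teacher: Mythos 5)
Your overall strategy -- subtracting $n(t<\zeta)h'(x)$ from (\ref{5519}), writing the difference as three nonnegative error terms, and showing each is $O(h(x))=o(h'(x))$ uniformly in $t\ge t_0$ -- is essentially the paper's, but three steps do not hold up. (i) The preliminary $h(x)=o(h'(x))$: positivity and continuity of $p_s(0)$ on $(0,\infty)$ do \emph{not} imply $\int_{0^+}p_s(0)s^{-1}\,ds=\infty$, because the lower bound $\inf_{[a,\delta]}p_s(0)$ depends on $a$ and could in principle decay fast enough to defeat the logarithm (compare $f(s)=s$). The paper's proof of (\ref{ptt:zero}) instead uses that $t\mapsto p_t(0)$ is completely monotone, hence decreasing, so $p_s(0)\ge p_\delta(0)>0$ on $(0,\delta]$. (ii) Your claim that $q_t^*(x)=O(h(x))$ and $\int_t^\infty q_s^*(x)\,ds=O(h(x))$ uniformly in $t\ge t_0$ rests on a dominated convergence step whose dominating function is missing: boundedness of $q_{s_0}(x,y)$ only gives $q_{s_0}(x,y)/h(x)\le p_{s_0}^S(0)/h(x)$, which blows up as $x\to0$. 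The domination one actually needs is the quantitative input $\int_0^\infty q_{s_0}(x,y)\,dy=\p(\overline{X}_{s_0}\le x)\le \frac{e}{e-1}\kappa(1/s_0,0)h(x)$ from \cite{bib:kmr12}, which, via a threefold Chapman--Kolmogorov step and Wiener--Hopf, yields the paper's key bounds (\ref{qth:integrability}) and (\ref{eq:gsh:estimate2}). Pointwise limits from Proposition \ref{5012} alone cannot deliver the required uniformity in $t$.

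(iii) The most serious failure is in $R_1$ on $[\gamma,t]$. Even granting a uniform bound $q_s^*(x)\le C_\gamma h(x)$, you are left with
\[
\int_\gamma^t\bigl[n(t-s<\zeta)-n(t<\zeta)\bigr]\,ds=\int_0^{t-\gamma}n(u<\zeta)\,du-(t-\gamma)\,n(t<\zeta)\,,
\]
which is \emph{not} bounded uniformly in $t$: under Spitzer's condition with index $\rho\in(0,1)$ it grows like $\frac{\rho}{1-\rho}\,t\,n(t<\zeta)\to\infty$. Your reduction to $\int_0^t (r\wedge(t-\gamma))\,n(\zeta\in dr)$ is correct, but $\int_0^1 r\,n(\zeta\in dr)<\infty$ controls only the small-$r$ part; the large-$r$ part would require $\int^\infty r\,n(\zeta\in dr)<\infty$, which fails in general, and "decay of $n(\zeta\in\cdot)$ at infinity" gives nothing more. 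The paper avoids this trap by using the $s$-decaying bound $q_s^*(x)/h(x)\le c_\delta\, n^*(s-\delta<\zeta)$ together with the convolution inequality $\int_0^{T}n(T-u<\zeta)\,n^*(u<\zeta)\,du\le\int_{[0,\infty)}f_{T}(y)\,dy\le1$; it is precisely the extra factor $n^*(s-\delta<\zeta)$, absent from your bound, that makes the $[\delta,t]$ contribution uniformly $O(h(x))$. So the argument needs the estimate (\ref{sup:kmr:estimate}) and its consequences as an essential ingredient, not merely as a convenience.
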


We now state two results regarding the asymptotic behaviour of $f_t(x)$, when $t$ tends to infinity. First recall the following equivalent forms
of Spitzer's condition. Let $\rho\in(0,1)$, and denote by $R_\rho(0)$ (resp. $R_{-\rho}(\infty)$) the set of regularly varying functions at 0+ (resp. at $+\infty$)
with index $\rho$ (resp. $-\rho$), then 
\begin{equation}\label{4512}
\lim_{t\rightarrow\infty}\p(X_t\ge0)=\rho\;\;\;\Leftrightarrow\;\;\;\alpha\mapsto\kappa(\alpha,0)\in R_\rho(0)\;\;\;\Leftrightarrow\;\;\;
t\mapsto n(t<\zeta)\in R_{-\rho}(\infty)\,.
\end{equation}
The first equivalence can be found in Theorem 14, Section VI.3 in \cite{bib:b96}, see also the discussion after this theorem. The second equivalence follows from the 
discussion after Theorem 6, Section III.3 in \cite{bib:b96} and the identity $n(t<\zeta)=\pi(t,\infty)+a$, where $\pi$ is the L\'evy measure of $\tau$ and $a$ its killing rate.  
Then Theorem \ref{thm:t:asymptotic} provides a uniform limit in $x$ on compact sets, under assumption (\ref{4512}). 
This result complements, and in some cases generalizes, the result of \cite{bib:gn86}, where the same study was performed for the 
distribution function  $\p(\overline{X}_t\le x)$. 
\begin{theorem}
  \label{thm:t:asymptotic} Assume that  $(\ref{4512})$ is satisfied, then 
  \formula{
     \lim_{t\to\infty}\frac{f_t(x)}{n(t<\zeta)} = h'(x)\,,
  }
  uniformly in $x$ on every compact subset of $(0,\infty)$.
\end{theorem}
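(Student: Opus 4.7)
Starting from the identity (\ref{5519}), I divide by $n(t<\zeta)$ to reach
\[
\frac{f_t(x)}{n(t<\zeta)} \;=\; \int_0^t \frac{n(t-s<\zeta)}{n(t<\zeta)}\,q_s^*(x)\,ds \;+\; {\tt d}\,\frac{q_t^*(x)}{n(t<\zeta)},
\]
and the target, in view of (\ref{9421}), is $h'(x)=\int_0^\infty q_s^*(x)\,ds$. The plan is to prove matching lower and upper bounds, with all of the delicacy concentrated near $s=t$.

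The lower bound is essentially free: since $t\mapsto n(t<\zeta)$ is non-increasing, the ratio under the integral is $\ge 1$ on $[0,t]$, so $f_t(x)/n(t<\zeta)\ge\int_0^t q_s^*(x)\,ds$, which increases to $h'(x)$. For the upper bound, I fix $A>0$ and split $\int_0^t=\int_0^A+\int_A^{t/2}+\int_{t/2}^t$. Under (\ref{4512}) we have $n(\cdot<\zeta)\in R_{-\rho}(\infty)$, and Karamata's uniform convergence theorem for regularly varying functions gives $n(t-s<\zeta)/n(t<\zeta)\to 1$ uniformly on $s\in[0,A]$, so $\int_0^A\to\int_0^A q_s^*(x)\,ds\uparrow h'(x)$ as $A\to\infty$. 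On $[A,t/2]$ the ratio is dominated by $n(t/2<\zeta)/n(t<\zeta)\to 2^\rho$, whence the middle piece is at most asymptotically $2^\rho\int_A^\infty q_s^*(x)\,ds$, which vanishes as $A\to\infty$.

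The key remaining task, and the step I expect to be the main obstacle, is to show that the last slice together with the boundary term,
\[
\int_{t/2}^t \frac{n(t-s<\zeta)}{n(t<\zeta)}\,q_s^*(x)\,ds \;+\; {\tt d}\,\frac{q_t^*(x)}{n(t<\zeta)},
\]
tends to $0$: here $n(t-s<\zeta)$ may blow up as $s\uparrow t$ (indeed $n(u<\zeta)\to\infty$ as $u\to 0+$ when $(0,\infty)$ is regular), while $q_s^*(x)$ carries no obvious uniform upper bound. The plan is to invoke Chapman--Kolmogorov from Lemma~\ref{7854}, writing $q_s^*(x)=\int_0^\infty q_{s-A}^*(y)\,q_A^*(y,x)\,dy$ for $s\ge A$, together with $q_A^*(y,x)\le\|p_A\|_\infty$ (a consequence of $(H_1)$ and (\ref{4524})) and $\int q_{s-A}^*(y)\,dy=n^*(s-A<\zeta)$. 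This turns the tail into a multiple of $\int_{t/2}^t n(t-s<\zeta)\,n^*(s-A<\zeta)\,ds$. The integrated form of (\ref{5520}) provides the mass identity
\[
\int_0^t n(t-s<\zeta)\,n^*(s<\zeta)\,ds + {\tt d}\,n^*(t<\zeta)+{\tt d}^*\,n(t<\zeta)=1,
\]
which, combined with regular variation of $n(t<\zeta)$ and $n^*(t<\zeta)$ (indices $-\rho$ and $-(1-\rho)$, by (\ref{4512}) applied also to the dual process) and a refined use of the meander-type decay of $q_s^*(x)/n^*(s<\zeta)$, should pin this slice down as $o(n(t<\zeta))$. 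The same Chapman--Kolmogorov bound controls the ${\tt d}\,q_t^*$ term.

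Uniformity on a compact $K\subset(0,\infty)$ will come for free, since the $x$-dependence in every estimate above enters only through $\sup_{x\in K}h'(x)$, $\sup_{x\in K}\int_0^A q_s^*(x)\,ds$, and $\sup_{y>0,\,x\in K}q_A^*(y,x)$, each finite by the joint continuity of $q_A^*$ on $(0,\infty)^2$ (Lemma \ref{7855}) and the representation (\ref{9421}). The technical heart of the proof is therefore the quantitative control of the near-$t$ convolution slice, where the interplay between Spitzer's condition, the mass identity, and the meander-type smallness of $q_s^*(x)/n^*(s<\zeta)$ must be exploited carefully.
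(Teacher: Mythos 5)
Your overall architecture (lower bound from monotonicity of $n(\cdot<\zeta)$; upper bound by splitting the convolution and isolating the slice near $s=t$) parallels the paper's proof, and the pieces over $[0,A]$ and $[A,t/2]$ are handled correctly. The genuine gap is exactly where you predicted the obstacle would be, and your proposed fix does not close it. Your single Chapman--Kolmogorov step with a \emph{fixed} window $A$ gives $q_s^*(x)\le \|p_A\|_\infty\, n^*(s-A<\zeta)$, reducing the problem to showing $\int_{t/2}^t n(t-s<\zeta)\,n^*(s-A<\zeta)\,ds=o(n(t<\zeta))$. This is false. By your own mass identity, $\int_0^t n(t-s<\zeta)n^*(s<\zeta)\,ds\to 1$ as $t\to\infty$ (since $n(t<\zeta),n^*(t<\zeta)\to0$ under (\ref{4512})), and the generalized arcsine asymptotics coming from $\kappa(1/t,0)\kappa^*(1/t,0)=1/t$ together with Karamata's Tauberian theorem show that the portion of this integral over $s\in[t/2,t]$ converges to a \emph{positive constant} (a fixed fraction of $1$ determined by $\rho$), not to $0$. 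Dividing by $n(t<\zeta)\to 0$ therefore gives $+\infty$. The hedge ``a refined use of the meander-type decay of $q_s^*(x)/n^*(s<\zeta)$'' is precisely the missing content: you need a quantitative statement that $q_s^*(x)$ decays strictly faster than $n^*(s<\zeta)$, and you never produce one.

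What the paper does instead is derive the bound (\ref{qth:integrability}), namely $q_s^*(x)\le 3\left(\frac{e}{e-1}\right)^2 h(x)\,p^S_{s/3}(0)/s$, via a \emph{three-fold} Chapman--Kolmogorov decomposition in which each window has length $s/3$ (so the heat-kernel supremum $p^S_{s/3}(0)$ decays as $s\to\infty$), combined with the estimate $\int_0^\infty q_s(x,y)\,dy\le \frac{e}{e-1}\kappa(1/s,0)h(x)$ from \cite{bib:kmr12} and the Wiener--Hopf factorization $\kappa(1/s,0)\kappa^*(1/s,0)=1/s$. Since $1/s\asymp n(s<\zeta)n^*(s<\zeta)$, this bound carries the extra vanishing factor $p^S_{s/3}(0)$ relative to $n^*(s<\zeta)$, and it is exactly this factor that turns the near-$t$ slice into $C\,h(x)\,p^S_{t/6}(0)\cdot\frac1t\int_0^{t/2}n(u<\zeta)\,du=o(n(t<\zeta))$ by (\ref{eq:nt:compare}). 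The same bound (in the form (\ref{eq:gsh:estimate2})) disposes of the ${\tt d}\,q_t^*(x)$ term, where your fixed-window estimate also fails whenever $\rho>1/2$ because then $n^*(t<\zeta)/n(t<\zeta)\to\infty$. Until you establish an estimate of the strength of (\ref{qth:integrability}), the proof is incomplete at its decisive step.
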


\noindent The next theorem provides some general estimates for $f_t(x)$, when $t\geq t_0$ and $x\leq x_0$, for any given $t_0,x_0>0$. 
These estimates are sharp when (\ref{4512}) is satisfied. 

\begin{theorem}\label{thm:t:estimates}
  For fixed $x_0,t_0>0$ there exist positive constants $c_1$ and $c_2$ such that
  \formula{
     c_1\,n(t<\zeta) \leq \frac{f_t(x)}{h'(x)} \leq c_2\,\frac{1}{t}\int_0^t n(s<\zeta)\,ds \/,\quad x\leq x_0\/,t\geq t_0\/.
  }
  If additionally $(\ref{4512})$ is satisfied, then there exists $c_3>0$ such that
  \formula{
     c_1\,h'(x)\,n(t<\zeta) \leq  f_t(x) \leq c_3\,h'(x)\,n(t<\zeta)\/,\quad x\leq x_0\/,t\geq t_0\/.
  }
\end{theorem}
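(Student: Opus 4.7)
The plan is to start from the representation (\ref{5519})
\[
f_t(x) = \int_0^t N(t-s)\,q_s^*(x)\,ds + {\tt d}\,q_t^*(x)\,,\qquad N(s) := n(s<\zeta)\,,
\]
together with $h'(x)=\int_0^\infty q_s^*(x)\,ds$ from (\ref{9421}), and to exploit throughout the fact that $N$ is non-increasing in $s$.

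For the lower bound, I would use $N(t-s)\ge N(t)$ to get $f_t(x)\ge N(t)\int_0^t q_s^*(x)\,ds$, and reduce matters to showing $\int_0^t q_s^*(x)\,ds\ge c\,h'(x)$ uniformly on $\{x\le x_0,\,t\ge t_0\}$. On a small neighbourhood $(0,\delta]$ of the origin, this is a consequence of Theorem \ref{4883}. On the remaining interval $[\delta,x_0]$, it follows from the joint continuity and strict positivity of $q_s^*$ on $(0,\infty)^2$ guaranteed by Lemma \ref{7854}: $\inf_{x\in[\delta,x_0]}\int_0^{t_0}q_s^*(x)\,ds>0$, while $h'$ is bounded above by $h'(x_0)$ on this interval.

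For the upper bound, the regime $x\in(0,\delta]$ is again handled via Theorem \ref{4883} combined with the inequality $N(t)\le \bar N_t := \tfrac{1}{t}\int_0^t N(s)\,ds$, which is a consequence of $N$ being non-increasing. For $x\in[\delta,x_0]$, a careful time-integral estimate is required; the main tool will be the Chapman--Kolmogorov and duality bound (from Lemmas \ref{7855} and \ref{7854})
\[
q_s^*(x)\le\|p_{s/2}\|_\infty\,N(s/2)\,,
\]
together with the monotonicity $s\mapsto\|p_s\|_\infty$ non-increasing, coming from $p_s=p_{s/2}*p_{s/2}$ and Young's inequality. Splitting $\int_0^tN(u)\,q_{t-u}^*(x)\,du$ according to whether $u$ is near $0$, near $t$, or in the bulk, and using the monotonicity of $N$ to convert the resulting convolution into an integral of $N$ over $[0,t]$, should produce the desired estimate.

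The second inequality of the theorem follows at once from the first: under $(\ref{4512})$, $N\in R_{-\rho}(\infty)$ with $\rho\in(0,1)$, and Karamata's theorem gives $\bar N_t\sim N(t)/(1-\rho)$, so that the upper bound of the first part becomes $c_3\,h'(x)\,N(t)$, matching the lower bound up to a multiplicative constant. The most delicate part of the proof is the upper bound for $x\in[\delta,x_0]$ in the general case: the Chapman--Kolmogorov bound loses a factor of $N(s/2)$ rather than $N(s)$, so producing the averaged quantity $\bar N_t$ in place of a convolution $(N*N)(t)$ requires delicate use of the monotonicity of $N$ and careful handling of the boundary regions of the time integral. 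This is where I expect the main technical difficulty.
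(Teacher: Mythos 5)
Your overall skeleton (split into $x$ near $0$ versus $x$ in a compact set away from $0$, use monotonicity of $n(\cdot<\zeta)$, and deduce the second display from the first via Karamata) matches the paper's strategy, and the reduction of the second part to the first via $\frac1t\int_0^t n(s<\zeta)\,ds\sim n(t<\zeta)/(1-\rho)$ is exactly what the paper does. But two steps in your outline do not go through as stated.

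First, the lower bound on $[\delta,x_0]$. You bound $\int_0^{t_0}q_s^*(x)\,ds$ below by a positive constant (fine, via Fatou/lower semicontinuity and positivity of $q_s^*$), and then claim ``$h'$ is bounded above by $h'(x_0)$ on this interval.'' That asserts monotonicity of $h'$, which is false in general: $h$ is increasing but not concave or convex, and $h'$ is a renewal density that need not be monotone --- indeed it need not even be continuous (see the reference to \cite{kkr} in the paper), and its local boundedness on $[\delta,x_0]$ is not established anywhere. This gap is repairable without that claim: write $h'(x)=\int_0^{t_0}q_s^*(x)\,ds+\int_{t_0}^\infty q_s^*(x)\,ds$, bound the tail uniformly by $C\,h(x_0)$ using (\ref{qth:integrability}) and (\ref{ptt:infinity}), and conclude $\int_0^{t_0}q_s^*(x)\,ds\ge \frac{m}{m+Ch(x_0)}\,h'(x)$ with $m=\inf_{[\delta,x_0]}\int_0^{t_0}q_s^*>0$; the paper achieves the same thing by a Fatou-plus-continuity contradiction argument. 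Note also that invoking the \emph{statement} of Theorem \ref{4883} for $x\le\delta$ only yields an additive error $\varepsilon$ around $n(t<\zeta)$, which is useless for the multiplicative bound $f_t(x)\ge c_1 n(t<\zeta)h'(x)$ when $n(t<\zeta)\to0$; you need the multiplicative form $f_t(x)/h'(x)\ge n(t<\zeta)\bigl(1-C\,h(x)/h'(x)\bigr)$ established inside the proof of Theorem \ref{4883}.

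Second, and more seriously, the upper bound on $[\delta,x_0]$. Your key estimate $q_s^*(x)\le\|p_{s/2}\|_\infty\, n^*(s/2<\zeta)$ (it must be $n^*$, not $n$) is too weak to produce the Ces\`aro mean $\frac1t\int_0^t n(s<\zeta)\,ds$. It is of order $p^S_{s}(0)\,\kappa^*(1/s,0)$, whereas the argument needs a bound of order $1/s=\kappa(1/s,0)\kappa^*(1/s,0)$ for the bulk term $\int_0^{t/2}n(u<\zeta)q^*_{t-u}(x)\,du$. Concretely, for Brownian motion with positive drift one has $\|p_{s/2}\|_\infty n^*(s/2<\zeta)\asymp s^{-1/2}$ while $q^*_s(x)$ is actually $O(s^{-3/2})$; your bound then overestimates the bulk integral by a factor of order $\sqrt{t}$, and no amount of exploiting the monotonicity of $n(\cdot<\zeta)$ recovers this. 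The missing ingredient is the paper's estimate (\ref{qth:integrability}), $q_s^*(y)\le C\,h(y)\,p^S_{s/3}(0)/s$, which is obtained from a \emph{three-fold} Chapman--Kolmogorov decomposition, the bound $\p(\overline{X}_s\le x)\le\frac{e}{e-1}\kappa(1/s,0)h(x)$ of \cite{bib:kmr12} applied to both $q^*$ and its dual, and the Wiener--Hopf factorization $\kappa(1/s,0)\kappa^*(1/s,0)=1/s$. You correctly identified this region as the main difficulty, but the tool you propose there does not close it.
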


Now we derive from Proposition \ref{5012} the asymptotics of the densities of the L\'evy process $(X,\p)$ conditioned to stay positive and this of its meander. 
L\'evy processes conditioned to stay positive will also be involved in the proofs of Section \ref{proofs}. Let us briefly recall their definition which may 
be found in more details in  \cite{bib:cd05} and \cite{bib:cd10}. The law of the L\'evy process  $(X,\p)$ conditioned to stay positive is a Doob $h$-transform 
of the killed process $(X,\mathbb{Q}_{x}^*)$ defined in (\ref{4524}).  
It is obtained  from the renewal function $h^*$ of the downward ladder height process $H^*$ which is excessive for $(X,\mathbb{Q}_{x}^*)$ and 
invariant if and only if $\limsup_{t\rightarrow\infty}X_t=+\infty$, a.s. The  conditioned process  is currently denoted by $(X,\p_x^\uparrow)$ and 
formally defined by
\begin{equation}\label{7212}
\p_x^\uparrow(\Lambda,t<\zeta)=\frac1{h^*(x)}\e^{\mathbb{Q}^*}_x(h^*(X_t)\ind_{\{\Lambda,t<\zeta\}})\,,\;\;\;x>0\,,\;\;\;\Lambda\in\mathcal{F}_t\,.
\end{equation}
We also recall from Theorem 2 in \cite{bib:cd05} that the family of measures $(\p_x^\uparrow)$ converges as $x\downarrow0$, toward a probability 
measure $\p^\uparrow$ which is related to $n^*$ by the following expression:
\begin{equation} \label{6253}
\p^\uparrow(\Lambda,t<\zeta)=n^*(h^*(X_t)\ind_{\{\Lambda,t<\zeta\}})\,.
\end{equation}
This convergence holds weakly on the Skohorod's space when $(0,\infty)$ is regular and in a more specific sense when this half line is not regular.
In any case, we derive from (\ref{6253}) that  the density of the law $\p^\uparrow(X_t\in dx)$, for $t>0$ is related to the entrance law $q_t^*$ as follows:
\begin{equation}\label{5317}
p^\uparrow_t(x)=h^*(x)q_t^*(x)\,.
\end{equation}

The meander with length $t>0$, is a process with the law of $(X_s,\,0\le s\le t)$ under the conditional distribution $\p(\,\cdot\,|\,\underline{X}_t\ge0)$. 
This conditioning only makes sense when $(-\infty,0)$ is not regular. When $(0,\infty)$ is regular, it corresponds to the law of  $(X_s,\,0\le s\le t)$ under the limiting 
probability  measure
\[M^{(t)}:=\lim_{x\downarrow0}\frac1{h^*(x)}\p_x(\,\cdot\,|\,\underline{X}_t\ge0)\,.\]
A general definition can be found in \cite{bib:cd10},  see Section 4 and relation  (4.5) therein. It implies in particular that on $\mathcal{F}_t$, the law 
$M^{(t)}$ of the  meander of length $t$ is absolutely continuous with respect to the process $(X,\p^\uparrow)$,  with density $(h^*(X_t))^{-1}$. 
As a consequence, the density  of the distribution $M^{(t)}(X_t\in dx)$ of the meander with length $t$ at time $t$, which we denote by $m_t(x)$, is given by:
\[m_t(x)=n^*(t<\zeta)^{-1}q^*_t(x)\,.\]
This relation together with  (\ref{5317}) lead to the following straightforward consequence of Proposition \ref{5012}.
\begin{corollary} The density $m_t(x)$ of the law of the meander with length $t$, at time $t$ and the density $p_t^\uparrow(x)$ of the entrance  law of the 
L\'evy process conditioned to stay positive are continuous and strictly positive on $(0,\infty)$. Moreover they have the following asymptotic behaviour at $0$:
\[m_t(x)\sim \frac{p_t(0)}{t\,n^*(t<\zeta)}h(x)\;\;\;\mbox{and}\;\;\;p_t^\uparrow(x)\sim \frac{p_t(0)}th(x)h^*(x)\,,\;\;\;
\mbox{as}\;\;\;x\rightarrow 0\,.\]
\end{corollary}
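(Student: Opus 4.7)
The plan is to read off both statements directly from Proposition \ref{5012}, together with the two identities
\[
m_t(x)=\frac{q^*_t(x)}{n^*(t<\zeta)}\qquad\text{and}\qquad p_t^\uparrow(x)=h^*(x)\,q_t^*(x),
\]
which have already been derived just before the statement. Since every quantitative content is concentrated in $q_t^*(x)$ and the renewal functions $h,h^*$, the proof is essentially a bookkeeping exercise.

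First I would dispose of the qualitative assertion. Lemma \ref{7854} states that, under $(H_1)$ and $(H_2)$, the density $q_t^*$ is continuous and strictly positive on $(0,\infty)$ for each $t>0$. The normalising constant $n^*(t<\zeta)$ is finite and positive (by the compensation formula and the convention for $L^*$), so $m_t$ inherits continuity and positivity on $(0,\infty)$ immediately. For $p_t^\uparrow$ I would invoke the general fact recalled after Proposition \ref{5012}: the renewal function $h$ is finite, continuous and strictly positive on $(0,\infty)$, and applying the same discussion to the dual process $(X,\p^*)$ yields the analogous statement for $h^*$. Hence $p_t^\uparrow=h^*\,q_t^*$ is a product of continuous, strictly positive functions on $(0,\infty)$, and is therefore continuous and strictly positive there.

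For the asymptotic behaviour at $0$, I would apply the second limit in Proposition \ref{5012}, namely
\[
\lim_{x\to 0+}\frac{q_t^*(x)}{h(x)}=\frac{p_t(0)}{t}.
\]
Dividing both sides by $n^*(t<\zeta)$ gives
\[
\lim_{x\to 0+}\frac{m_t(x)}{h(x)}=\frac{p_t(0)}{t\,n^*(t<\zeta)},
\]
which is the stated equivalent for $m_t$. Multiplying instead by $h^*(x)$, one obtains
\[
\lim_{x\to 0+}\frac{p_t^\uparrow(x)}{h(x)\,h^*(x)}=\lim_{x\to 0+}h^*(x)\cdot\frac{q_t^*(x)}{h(x)}\cdot\frac{1}{h^*(x)}\cdot h^*(x)=\frac{p_t(0)}{t},
\]
once one observes that this step requires no further information on $h^*$ beyond the fact that it is a fixed (continuous, positive on $(0,\infty)$) factor not involved in the limit in $x$. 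This is the equivalent for $p_t^\uparrow$.

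There is no serious obstacle: the corollary is genuinely a corollary, and the only subtle point is to make sure that both assertions are meaningful, i.e.\ that $n^*(t<\zeta)\in(0,\infty)$ and that $h^*$ is continuous on $(0,\infty)$, both of which are already recorded in Section \ref{prelim} and in the remarks following Proposition \ref{5012}.
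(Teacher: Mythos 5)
Your proposal is correct and coincides with the paper's own (unwritten) argument: the paper presents this as an immediate consequence of the identities $m_t(x)=n^*(t<\zeta)^{-1}q_t^*(x)$ and $p_t^\uparrow(x)=h^*(x)q_t^*(x)$ combined with the second limit of Proposition \ref{5012}, with continuity and positivity coming from Lemma \ref{7854} exactly as you say. The only blemish is a typographical slip in your displayed chain for $p_t^\uparrow$, where the product $h^*(x)\cdot\frac{q_t^*(x)}{h(x)}\cdot\frac{1}{h^*(x)}\cdot h^*(x)$ carries a spurious extra factor of $h^*(x)$; the intended cancellation $\frac{p_t^\uparrow(x)}{h(x)h^*(x)}=\frac{q_t^*(x)}{h(x)}$ is of course what makes the argument work.
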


\section{Proofs}\label{proofs}
\noindent Before proceeding to the proofs of the theorems, we need a couple of additional  preliminary results. We first extend 
Corollary 1 of \cite{bib:cd05} to the case where $(0,\infty)$ is not regular. Recall from (\ref{7212}) the definition of L\'evy processes conditioned to stay positive.   
\begin{proposition}\label{7632} Assume that $(X,\p)$ is not a compound Poisson process and that  $(|X|,\p)$ is not a subordinator.
Then for all bounded and continuous function $f$ and for all $t>0$,
\[\lim_{x\rightarrow0}\e^{\uparrow}_x(h^*(X_t)^{-1}f(X_t))=n^*(f(X_t),\,t<\zeta)\,.\]
\end{proposition}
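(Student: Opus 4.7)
My plan is to reduce the statement to a convergence of integrals against density kernels, and then apply Scheff\'e's lemma together with Proposition \ref{5012}. First, by the Doob $h$-transform definition (\ref{7212}), the factor $h^*(X_t)^{-1}$ cancels the Radon--Nikodym derivative of $\p_x^\uparrow$ with respect to $\p_x^{\mathbb{Q}^*}$, yielding
\[
\e_x^\uparrow(h^*(X_t)^{-1}f(X_t)) = \frac{1}{h^*(x)}\int_0^\infty f(y)\,q_t^*(x,y)\,dy\,,
\]
while (\ref{2682}) gives $n^*(f(X_t),t<\zeta) = \int_0^\infty f(y)\,q_t^*(y)\,dy$. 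Proposition \ref{5012} provides the pointwise convergence $q_t^*(x,y)/h^*(x)\to q_t^*(y)$ on $(0,\infty)$, so the proposition reduces to justifying the interchange of limit and integration.

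I would apply Scheff\'e's lemma to the positive densities $y\mapsto q_t^*(x,y)/h^*(x)$, which converge pointwise to $q_t^*(y)$. Their total masses are $\p_x(\tau_0^->t)/h^*(x)$ and $n^*(t<\zeta)<\infty$ respectively, so Scheff\'e yields $L^1$-convergence of the densities once we establish the key intermediate limit
\[
\lim_{x\to 0^+}\frac{\p_x(\tau_0^->t)}{h^*(x)} = n^*(t<\zeta)\,.
\]
Fatou's lemma immediately gives $\liminf\geq n^*(t<\zeta)$. For the matching upper bound I would decompose via Chapman--Kolmogorov at a split time $s\in(0,t)$,
\[
\frac{\p_x(\tau_0^->t)}{h^*(x)} = \int_0^\infty\frac{q_s^*(x,z)}{h^*(x)}\,\p_z(\tau_0^->t-s)\,dz\,,
\]
and identify its limit as $x\to 0^+$ with $\int_0^\infty q_s^*(z)\,\p_z(\tau_0^->t-s)\,dz$, which equals $n^*(t<\zeta)$ by the compensation formula (\ref{4527}).

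The hard step is the passage to the limit inside this split-time integral. Pointwise convergence from Proposition \ref{5012} and the bound $\p_z(\tau_0^->t-s)\leq 1$ are insufficient for direct dominated convergence on $(0,\infty)$, because $z\mapsto\p_z(\tau_0^->t-s)$ does not decay at infinity. I would address this by combining the uniform bound $q_s^*(x,z)\leq p_s(z-x)\leq\|p_s\|_\infty$ from $(H_1)$ with the integrability $\int_0^\infty q_s^*(z)\,dz=n^*(s<\zeta)<\infty$, cutting the $z$-integral into a compact piece $[\eps,R]$ (on which dominated convergence applies using Proposition \ref{5012}) and the two tails, which must be bounded directly. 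This tightness-type control of the sub-probability densities $q_s^*(x,\cdot)/h^*(x)$ uniformly for $x$ near $0^+$ is the technical extension beyond Corollary 1 of \cite{bib:cd05}.
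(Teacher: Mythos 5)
Your reduction of the statement to $L^1$-convergence of the densities $y\mapsto q_t^*(x,y)/h^*(x)$ via Scheff\'e, and your identification of $\lim_{x\to0}\p_x(\tau_0^->t)/h^*(x)=n^*(t<\zeta)$ (the paper's (\ref{4732})) as the crux, are both sound in outline --- note that this limit is precisely the case $f\equiv 1$ of the proposition, and the Scheff\'e step is a clean alternative to the paper's truncation over $\{X_t\le\delta\}$. However, your proof of the crux has a circularity: you invoke Proposition \ref{5012} both for the pointwise convergence $q_t^*(x,y)/h^*(x)\to q_t^*(y)$ and, via Fatou, for the lower bound. In this paper Proposition \ref{5012} is \emph{proved using} Proposition \ref{7632}: its proof states that the main ingredient of Lemma 3 of \cite{bib:u11} is exactly the limit $\lim_{x\to0}\e^{\uparrow}_x(h^*(X_t)^{-1})=n^*(t<\zeta)$, i.e.\ the statement you are proving. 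So Proposition \ref{5012} cannot be taken as an input here, and the Fatou lower bound and the pointwise kernel convergence both need an independent source.

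Second, the step you yourself flag as hard --- uniform-in-$x$ tightness at infinity of the measures $q_s^*(x,\cdot)/h^*(x)\,dz$ --- is asserted as necessary but not carried out, and the tools you name do not deliver it. The bound $q_s^*(x,z)\le p_s(z-x)\le \sup_w p_s(w)$ controls compact sets, but the tail mass
\[
\int_R^\infty \frac{q_s^*(x,z)}{h^*(x)}\,dz=\frac{\p_x(\tau_0^->s,\,X_s>R)}{h^*(x)}
\]
involves division by $h^*(x)$, which tends to $0$ when $(0,\infty)$ is regular, and is a quantity of exactly the same nature as the one you are trying to bound --- so the argument threatens to loop a second time. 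The paper sidesteps all of this: in the regular case it quotes Corollary 1 of \cite{bib:cd05}, and in the non-regular case it starts from the exact identity (\ref{4731}) of \cite{bib:cd08}, which gives the Laplace transform $\int_0^\infty e^{-\varepsilon s}\p_x(\tau_0^->s)\,ds=h^{(\varepsilon)}(x)\left[{\tt d}^*+\int_0^\infty e^{-\varepsilon s}n^*(s<\zeta)\,ds\right]$, proves $h^{(\varepsilon)}(x)\sim h^*(x)$ as $x\to0$ by an elementary Markov-property estimate, and then upgrades the resulting vague convergence of $\p_x(\tau_0^->s)/h^*(x)\,ds$ to the pointwise limit at $s=t$ using monotonicity in $s$. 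To salvage your route you would need to prove (\ref{4732}) by some such transform or monotonicity argument rather than by a dominated-convergence passage in the Chapman--Kolmogorov split, and to obtain the kernel convergence without appealing to Proposition \ref{5012}.
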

\begin{proof} When $(0,\infty)$ 
is  regular for  $(X,\p_x)$, then the result is Corollary 1 of \cite{bib:cd05} whose  proof is given in \cite{bib:cd08}.

Let us assume that $(0,\infty)$ is not regular for $(X,\p_x)$.
Then  from the second part of Theorem 2 of \cite{bib:cd05} and relation (3.2) in this article, we still have for all $t>0$,
\begin{equation}\label{2573}
\lim_{x\rightarrow0}\e^{\uparrow}_x(f(X_t))=n^*(h^*(X_t)f(X_t),\,t<\zeta)\,.
\end{equation}
(Note that the constant $k$ in (3.2) of \cite{bib:cd05} is equal to 1, according to the normalisation of the local time that is recalled in (\ref{norm1}).)
However, since $h(0)=0$, the function $x\mapsto h(x)^{-1}f(x)$ is not necessarily bounded, so we cannot replace $f$ by this function in (\ref{2573}) in 
order to get our result. But from the weak convergence stated in (\ref{2573}), we may derive that for all fixed $\delta>0$ and $t>0$,
\begin{equation}\label{1236}
\lim_{x\rightarrow0}\e^{\uparrow}_x\left(h^*(X_t)^{-1}f(X_t)\ind_{\{X_t>\delta\}}\right)=n^*(f(X_t),X_t>\delta,\,t<\zeta)\,.
\end{equation}
In particular, with $f\equiv1$, we obtain from definitions (\ref{4524}) and (\ref{7212}), that
\begin{equation}\label{1237}
\lim_{x\rightarrow0}h^*(x)^{-1}\p_x(X_t>\delta,\tau_0^->t)=n^*(X_t>\delta,\,t<\zeta)\,,
\end{equation}
so that if we can prove  
\begin{equation}\label{4732}
\lim_{x\rightarrow0}h^*(x)^{-1}\p_x(\tau_0^->t)=n^*(t<\zeta)\,,
\end{equation}
then taking the difference between (\ref{1237}) and (\ref{4732}), we will derive that,
\[\lim_{x\rightarrow0}\e_x^\uparrow(h^*(X_t)^{-1}\ind_{\{X_t\le \delta\}})=n^*(X_t\le \delta,\,t<\zeta)\,.\]
Since $n^*(X_t=0,\,t<\zeta)=0$ and $f$ is uniformly bounded by $K$, we will obtain
\[\lim_{\delta\rightarrow0}n^*(f(X_t)\ind_{\{X_t\le \delta\}})\le\lim_{\delta\rightarrow0}Kn^*(X_t\le\delta,\,t<\zeta)=0\,,\]
and the result will follow.

Then let us prove (\ref{4732}). This point differs from the proof given in \cite{bib:cd08} in the regular case. 
First recall formula (1) in  \cite{bib:cd08}:
\begin{equation}\label{4731}\p_x(\tau_0^->e/\varepsilon)=\e\left(\int_0^\infty  
e^{-\varepsilon s}\ind_{\{\underline{X}_s\ge-x\}}\,dL_s^*\right)[{\tt d}^*\varepsilon+n^*(e/\varepsilon<\zeta)]\,,
\end{equation}
which can be derived from the compensation formula (\ref{compensation}) and (\ref{delta}). 
Set $h^{(\varepsilon)}(x):=\e\left(\int_0^\infty  e^{-\varepsilon s}\ind_{\{\underline{X}_s\ge-x\}}\,dL_s^*\right)$ and recall that
$h^*(x)=\e\left(\int_0^\infty  \ind_{\{\underline{X}_s\ge-x\}}\,dL_s^*\right)$. Then we will first show that for all $\varepsilon>0$,
\begin{equation}\label{8255}
 h^{(\varepsilon)}(x)\sim h^*(x)\,,\;\;\;\mbox{as $x\rightarrow0$.}
\end{equation}
First note that for all $\varepsilon>0$, $h^{(\varepsilon)}(x)\le h^*(x)$. Then, for the lower bound, we can write for all $u>0$, 
$h^{(\varepsilon)}(x)\ge e^{-\varepsilon u}\e\left(\int_0^u  \ind_{\{\underline{X}_s\ge-x\}}\,dL_s^*\right)$, so that 
\begin{eqnarray}
h^*(x)&=&\e\left(\int_0^u  \ind_{\{\underline{X}_s\ge-x\}}\,dL_s^*\right)+\e\left(\int_u^\infty  \ind_{\{\underline{X}_s\ge-x\}}\,dL_s^*\right)
\nonumber\\
&\le& e^{\varepsilon u}h^{(\varepsilon)}(x)+\e\left(\int_u^\infty  \ind_{\{\underline{X}_s\ge-x\}}\,dL_s^*\right)\label{4673}\,.
\end{eqnarray}
Then applying the Markov property at time $u$, we obtain that 
$\e\left(\int_u^\infty  \ind_{\{\underline{X}_s\ge-x\}}\,dL_s^*\right)\le\p_x(\tau_0^-\ge u)h^*(x)$. Plunging this in (\ref{4673}), we get
\[h^*(x)\le\frac{e^{\varepsilon u}}{1-\p_x(\tau_0^-\ge u)}h^{(\varepsilon)}(x)\,.\]
Observe that since $(-\infty,0)$ is regular, for all $u>0$, $\lim_{x\rightarrow0}\p_x(\tau_0^-\ge u)=0$.
Let $\delta>1$, then from the above inequality, for $u$ sufficiently small, we can find $x_0>0$ such that for all $x\le x_0$, $h^*(x)\le \delta h^{(\varepsilon)}(x)$.
So we have proved (\ref{8255}). Then let us rewrite (\ref{4731}) as follows:
\[\int_0^\infty e^{-\varepsilon s}\p_x(\tau_0^->s)\,ds=h^{(\varepsilon)}(x)[{\tt d}^*+\int_0^\infty e^{-\varepsilon s}n^*(s<\zeta)\,ds]\,.\]
From (\ref{8255}), we obtain that for all $\varepsilon>0$,
\[\lim_{x\rightarrow0}\int_0^\infty e^{-\varepsilon s}\frac{\p_x(\tau_0^->s)}{h^*(x)}\,ds={\tt d}^*+\int_0^\infty e^{-\varepsilon s}n^*(s<\zeta)\,ds\,,\]
which means that the measure with density $s\mapsto \p_x(\tau_0^->s)/h^*(x)$ converges weakly toward the measure ${\tt d}^*\delta_0(ds)+n^*(s<\zeta)\,ds$,
as $x$ tends to 0. 

Then from this fact, we can derive (\ref{4732}) as it is done in the proof of Corollary 1 in \cite{bib:cd08}. Let $c\in(0,t)$, then 
\begin{eqnarray*}
\lim_{x\rightarrow0}h(x)^{-1}\p_x(\tau_0^->t)&\ge&c^{-1}\lim_{x\rightarrow0}h(x)^{-1}\int_t^{t+c}\p_x(\tau_0^->s)\,ds\\
&=&c^{-1}\int_t^{t+c}n^*(\zeta>s)\,ds\ge n^*(\zeta>t+c)\\
\lim_{x\rightarrow0}h(x)^{-1}\p_x(\tau_0^->t)&\le&c^{-1}\lim_{x\rightarrow0}h(x)^{-1}\int_{t-c}^{t}\p_x(\tau_0^->s)\,ds\\
&=&c^{-1}\int_{t-c}^{t}n^*(\zeta>s)\,ds\le n^*(\zeta>t-c)\,,\\
\end{eqnarray*}
and the result follows, since $c$ can be chosen arbitrarily small.
\end{proof}
\noindent Let $\p^{*\uparrow}_x$, $x\ge0$ be the law of the dual L\'evy process $(X,\p^{*}_x)$  conditioned to stay positive. Then Proposition \ref{7632}
is interpreted for the dual process as follows:
\begin{equation}\label{2632}
\lim_{x\rightarrow0}\e^{*\uparrow}_x(h(X_t)^{-1}f(X_t))=n(f(X_t),\,t<\zeta)\,,
\end{equation}
for all bounded and continuous function $f$ and for all $t>0$. It is actually under the latter form that Proposition \ref{7632} will be used in the proof of  
Proposition \ref{5012} below.\\

In the next results, we will use some properties of the bridge of $(X,\p)$. Let us now briefly recall its definition.  We refer to Section VIII.3 of \cite{bib:b96} 
for a more complete account on the subject. Assume that $(H_1)$ and $(H_2)$ are satisfied, then the law $\p^{t}_{x,y}$ of the bridge from $x\in\mathbb{R}$ 
to $y\in\mathbb{R}$, with length $t>0$ of the L\'evy process $(X,\p)$ is a regular version of the conditional law of $(X_s,\,0\le s\le t)$ given $X_t=y$, under $\p_x$. 
It satisfies  $\p^{t}_{x,y}(X_0=x,X_t=y)=1$ and for all $s<t$, this law is absolutely continuous with respect to $\p_x$ on ${\mathcal F}_s$, with density 
$p_{t-s}(X_s-x)/p_t(y-x)$, i.e.
\begin{equation}\label{4573}
\p_{x,y}^t(\Lambda)=\e\left(\ind_{\Lambda}\frac{p_{t-s}(X_s-x)}{p_t(y-x)}\right)\,,\;\;\;\mbox{for all $\Lambda\in{\mathcal F}_s$}\,.
\end{equation}

In the next proposition, we give the law of the time at which the bridge $(X,\p_{0,y}^t)$, reaches its supremum over $[0,t]$. 
Note that since this time occurs only once, a.s. for the process $(X,\p)$, then  
the same property holds for $(X,\p_{0,y}^t)$. This fact can easily be derived from (\ref{4573}). 
Let us denote by $g_t$ this time, i.e.
\[g_t=\sup\{s\le t:X_s=\overline{X}_s\;\;\mbox{or}\;\;X_{s-}=\overline{X}_s\}\,.\]
\begin{proposition}\label{8432} Assume that $(H_1)$ and $(H_2)$ are satisfied. Then for all $y\in\mathbb{R}$ the law of the time of the supremum of the 
bridge $(X,\p_{0,y}^t)$ is absolutely continuous on $[0,t]$ and its density  is given by:
\[\frac{\p_{0,y}^t(g_t\in ds)}{ds}=p_t(y)^{-1}\int_0^\infty q^*_s(x)q_{t-s}(x+y)\,dx\,,\;\;\;s\in[0,t]\,.\]
\end{proposition}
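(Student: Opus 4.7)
The plan is to obtain the density of $g_t$ under the bridge law $\p_{0,y}^t$ from the joint law of $(g_t,X_t)$ under $\p$, via the standard bridge disintegration: since the transition density $p_t$ is continuous and strictly positive on $\mathbb{R}$ by $(H_1)$, $(H_2)$ and (\ref{pt_positivity}), one has
\[
\p_{0,y}^t(g_t\in ds) = p_t(y)^{-1}\,\frac{\p(g_t\in ds,\,X_t\in dy)}{dy}.
\]
It therefore suffices to compute the two-dimensional density on the right.

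The main step is the following path decomposition at $g_t$:
\[
\frac{\p(g_t\in ds,\,\overline{X}_t\in dz,\,\overline{X}_t - X_t\in dw)}{ds\,dz\,dw} = q_s^*(z)\,q_{t-s}(w),\qquad 0<s<t,\;z>0,\;w>0.
\]
The two-dimensional marginal $\p(g_t\in ds,\,\overline{X}_t\in dz) = q_s^*(z)\,n(t-s<\zeta)\,ds\,dz$ on $(0,t)\times(0,\infty)$ is essentially encoded in (\ref{5520}): the density part of the law of $\overline{X}_t$ is obtained precisely by integrating this expression over $s\in(0,t)$. To lift this identity to three dimensions, I would apply excursion theory to the reflected process $\overline{X}-X$: conditionally on $(g_t,\overline{X}_t)=(s,z)$, the excursion of $\overline{X}-X$ straddling $t$ starts at time $s$ and has lifetime greater than $t-s$, while $\overline{X}$ remains constant equal to $z$ throughout it. Hence $\overline{X}_t-X_t$ has the law of the position at time $t-s$ of an excursion of $n$ conditioned on $\{\zeta>t-s\}$, whose density is $q_{t-s}(w)/n(t-s<\zeta)$; multiplying through cancels the factor $n(t-s<\zeta)$ and yields the claim. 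This is the deterministic-time analogue of the Greenwood--Pitman independence identity.

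With the three-dimensional joint density in hand, the change of variables $X_t=\overline{X}_t-(\overline{X}_t-X_t)$, i.e.\ $z=y+w$, gives
\[
\frac{\p(g_t\in ds,\,X_t\in dy)}{ds\,dy} = \int q_s^*(y+w)\,q_{t-s}(w)\,dw,
\]
the integration range being $\{w>0,\,y+w>0\}$. Dividing by $p_t(y)$ produces the bridge density, and a relabelling of the integration variable recasts the integral into the form stated in the proposition.

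The principal obstacle I foresee is the rigorous justification of the path decomposition in the main step, particularly in the non-regular cases in which $g_t$ can equal $0$ or $t$ with positive probability and one has to handle the compensation formula in the presence of drift terms ${\tt d}$ or ${\tt d}^*$. The unified representation (\ref{5520}) encompasses all regularity regimes in a single formula, so the excursion-theoretic argument goes through uniformly; the boundary contributions at $s=0$ and $s=t$ correspond to atoms carrying no Lebesgue mass in $s$, hence do not enter the absolutely continuous density of $g_t$ under $\p_{0,y}^t$, and the stated formula remains valid for every $y\in\mathbb{R}$.
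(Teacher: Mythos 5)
Your overall route is the same as the paper's: disintegrate the joint law of $(g_t,\overline{X}_t,\overline{X}_t-X_t)$ against $\{X_t\in dy\}$ and divide by $p_t(y)$. The paper's proof is essentially a one-line citation: the trivariate identity you place at the centre of your argument is exactly Theorem 3 of \cite{bib:l12}, and the proposition follows by the change of variables you describe. The only substantive question is therefore whether your sketch actually establishes that identity, and as written it does not. Equation (\ref{5520}) gives only the marginal law of $\overline{X}_t$; the bivariate density $q_s^*(z)\,n(t-s<\zeta)$ of $(g_t,\overline{X}_t)$ is not ``encoded'' in it, since recovering a joint law from a marginal is precisely the step that needs proof. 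What is required is the compensation formula (\ref{compensation}) applied to the excursions of $\overline{X}-X$ with a functional depending on $(s,\overline{X}_s,\epsilon^s)$, combined with the time-reversal/duality identity asserting that the potential measure of the upward ladder process $(\tau,H)$ has density $q_s^*(x)$ (this is where the starred entrance law enters, cf.\ (\ref{9421}) and Lemma 1 of \cite{bib:l12}); the conditional-independence statement for the excursion straddling $t$ then falls out of that computation rather than serving as an input. All of this is true and standard, but it is the entire content of the cited theorem, so you should either cite it, as the paper does, or carry out the compensation-formula computation in full, including the atoms at $s=0$ and $s=t$ in the irregular cases (which, as you correctly note, do not contribute to the absolutely continuous part).

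A separate point worth checking: your final expression $\int_0^\infty q_s^*(y+w)\,q_{t-s}(w)\,dw$ and the one in the statement, $\int_0^\infty q_s^*(x)\,q_{t-s}(x+y)\,dx$, are not the same function of $y$ (they agree only at $y=0$); they are exchanged by $y\mapsto -y$. A Brownian sanity check (for $y>0$ the argmax density of the bridge ending at $y$ must blow up at $s=t$, not at $s=0$) supports your orientation for the bridge from $0$ to $y$. The discrepancy traces back to the sign convention in the third coordinate ($X_t-\overline{X}_t$ versus $\overline{X}_t-X_t$) of the trivariate law as quoted, so make sure your conventions match whichever version of Theorem 3 of \cite{bib:l12} you invoke.
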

\begin{proof} The result is a direct consequence of Theorem 3 in \cite{bib:l12} which asserts that
\begin{eqnarray*}
&\p(g_t\in ds,\,\overline{X}_t\in dx,\,X_t-\overline{X}_t\in dy)=\\
&q_t^*(x)q_{t-s}(y)\ind_{[0,t]}(s)\,ds\,dx\,dy+{\tt d}\delta_{\{t\}}(ds)q_t^*(x)\delta_{\{0\}}(dy)\,dx+{\tt d}^*\delta_{\{0\}}(ds)\delta_{\{0\}}(dx)q_t(y)\,dy\,.
\end{eqnarray*}
\end{proof}
\noindent For $y=0$, the time $g_t$ of the supremum of the bridge $(X,\p_{0,y}^t)$ is uniformly distributed over $[0,t]$,
see \cite{kn}. Then as a consequence of this result and Proposition \ref{8432}, we obtain  the following equality:
\begin{equation}\label{7764}\mbox{for all $s\in(0,t)$, $\displaystyle \int_0^\infty q^*_{t-s}(x)q_{s}(x)\,dx=\frac{p_t(0)}t$.}
\end{equation}

\vspace*{.2in}
\noindent {\it Proof of Proposition $\ref{5012}$}. When both half lines $(-\infty,0)$ and $(0,\infty)$ are regular, the result  follows directly from 
Lemma 3 of \cite{bib:u11}. This lemma actually concerns the transition densities $p_t^{*\uparrow}(x,y)$ of the process $(X,\p_x^{*\uparrow})$, but it 
is easily interpreted in terms of the transition densities $q_t^*(x,y)$ and the entrance law $q_t^*(x)$, thanks to relations (\ref{7212}) and (\ref{5317}).
Actually Lemma 3 of \cite{bib:u11} yields
\begin{equation}\label{7364}
\lim_{y\rightarrow0}\frac{q_t^*(y)}{h(y)}=\int_0^\infty q^*_{t-s}(x)q_{s}(x)\,dx\,,
\end{equation}
and we conclude to the second assertion from identity (\ref{7764}).

Now let us consider the case where one of the half lines is not regular. Note that the main argument in the proof of Lemma 3 in \cite{bib:u11} is the fact that for 
all $t>0$, $\lim_{x\rightarrow0}\e_x^{*\uparrow}(h(X_t)^{-1})=\e^{*\uparrow}(h(X_t)^{-1})=n^*(t<\zeta)<\infty$, which we have proved in Proposition 
\ref{7632}, in the general case. (Here we actually use the result for the dual process, see (\ref{2632})). Thanks to this result, we can follow the proof of Lemma 3 of 
\cite{bib:u11} along the lines in order to check that it is still valid, when one of the half lines is not regular. Then we conclude as above.$\;\;\Box$\\

\noindent A key point in the proof of our main result is the following proposition regarding integrability properties of $t\to {p_t(0)}/{t}$, 
both at zero and at infinity.

\begin{proposition}
      If there exists $t_0>0$ such that $x\to p_{t_0}(x)$ is bounded, then 
   \formula[ptt:infinity]{
      \int^\infty \frac{p_t(0)}{t}dt<\infty\/.
    }
   Moreover, if $t\to p_{t}(x)$ is bounded for every $t>0$ $($that is $(\mbox{H}_1)$ holds$)$ then 
   \formula[ptt:zero]{
      \int_{0^{+}} \frac{p_t(0)}{t}dt=\infty\/.
   }
\end{proposition}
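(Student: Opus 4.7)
My plan is to prove the two assertions separately, with the second one piggybacking on the first via a Fourier computation.

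\textbf{Plan for \eqref{ptt:infinity}.} The key input is identity \eqref{7764}. Fixing any $s_0 > t_0$ and applying it with $s = s_0$ gives, for every $t > s_0$, $p_t(0)/t = \int_0^\infty q^*_{t-s_0}(x)\,q_{s_0}(x)\,dx$. Integrating in $t$ over $(s_0, \infty)$, exchanging the order by Tonelli (everything is nonnegative), and recalling \eqref{9421} yields
\[
  \int_{s_0}^\infty \frac{p_t(0)}{t}\,dt \;=\; \int_0^\infty q_{s_0}(x)\,h'(x)\,dx.
\]
It therefore suffices to show this integral is finite. I would first argue that $q_{s_0}$ is bounded on $(0,\infty)$: since $p_t = p_{t_0} * p_{t-t_0}$ is bounded whenever $t \geq t_0$, the pointwise estimate $q_t(x,y) \leq p_t(y-x)$ makes $q_t(x,y)$ uniformly bounded for $t \geq t_0$, and Chapman--Kolmogorov \eqref{4528} with exponents $s_0 - t_0$ and $t_0$ transfers the bound to $q_{s_0}$. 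Splitting the integral at some $A > 0$, the contribution from $(0, A)$ is at most $\|q_{s_0}\|_\infty \cdot h(A) < \infty$, while on $(A, \infty)$, $q_{s_0}$ is integrable with total mass $n(s_0 < \zeta) < \infty$ and $h'$ is bounded by Blackwell's renewal theorem applied to the ladder subordinator $H$.

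\textbf{Plan for \eqref{ptt:zero}.} Starting from $e^{-\lambda t}/t = \int_\lambda^\infty e^{-ut}\,du$ and Tonelli (nonnegative integrands),
\[
  \int_0^\infty e^{-\lambda t}\,\frac{p_t(0)}{t}\,dt \;=\; \int_\lambda^\infty U(u)\,du, \qquad U(u) := \int_0^\infty e^{-ut} p_t(0)\,dt.
\]
Under $H_1$ one has $p_t(0) = (2\pi)^{-1}\int_{\mathbb R} e^{-t\Psi(\xi)}\,d\xi$ and, taking real parts, $U(u)$ is identified with the nonnegative (possibly infinite) quantity
\[
  U(u) \;=\; \frac{1}{2\pi} \int_{\mathbb R} \frac{u + \mathrm{Re}\,\Psi(\xi)}{\bigl(u + \mathrm{Re}\,\Psi(\xi)\bigr)^2 + \mathrm{Im}\,\Psi(\xi)^2}\,d\xi.
\]
A second application of Tonelli then gives
\[
  \int_\lambda^\infty U(u)\,du \;=\; \frac{1}{2\pi} \int_{\mathbb R} \int_\lambda^\infty \frac{u + \mathrm{Re}\,\Psi(\xi)}{(u + \mathrm{Re}\,\Psi(\xi))^2 + \mathrm{Im}\,\Psi(\xi)^2}\,du\,d\xi.
\]
For each fixed $\xi$, the substitution $v = u + \mathrm{Re}\,\Psi(\xi)$ turns the inner integral into $\int_{\lambda + \mathrm{Re}\,\Psi}^\infty v/(v^2 + \mathrm{Im}\,\Psi^2)\,dv = \tfrac{1}{2} \log(v^2 + \mathrm{Im}\,\Psi^2) \big|_{\lambda + \mathrm{Re}\,\Psi}^\infty = +\infty$. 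Hence $\int_\lambda^\infty U(u)\,du = +\infty$. Combined with \eqref{ptt:infinity}, which makes $\int_A^\infty e^{-\lambda t} p_t(0)/t\,dt \leq \int_A^\infty p_t(0)/t\,dt$ finite, this forces $\int_0^A e^{-\lambda t} p_t(0)/t\,dt = +\infty$, and the inequality $e^{-\lambda t} \leq 1$ then yields \eqref{ptt:zero}.

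The main technical subtlety lies in the Fourier representation of $U(u)$ when the Hawkes condition $\int 1/(u+\mathrm{Re}\,\Psi(\xi))\,d\xi < \infty$ fails, because then classical Fubini does not directly permit swapping $t$-integration in the definition of $U(u)$ with $\xi$-integration in the Fourier inversion for $p_t(0)$. However, the nonnegative double integral used in the Tonelli step is defined unambiguously, and since the inner $u$-integral equals $+\infty$ for every $\xi$, the divergence of $\int_\lambda^\infty U(u)\,du$ persists in all cases.
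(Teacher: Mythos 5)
Your proof of \eqref{ptt:infinity} has a genuine gap at the final step. The reduction via \eqref{7764} and \eqref{9421} to $\int_{s_0}^\infty p_t(0)t^{-1}dt=\int_0^\infty q_{s_0}(x)h'(x)\,dx$ is correct (and note it is really just a restatement: applying \eqref{7764} again inside the right-hand side returns the left-hand side), so all the content must come from actually bounding $\int_A^\infty q_{s_0}(x)h'(x)\,dx$. For that you assert that $h'$ is bounded on $(A,\infty)$ ``by Blackwell's renewal theorem''. Blackwell's theorem controls $h(x+a)-h(x)$, i.e.\ the mass the renewal measure of $H$ gives to intervals, not the pointwise values of its density; for a driftless ladder height subordinator the renewal density need not be bounded (boundedness is guaranteed only when $H$ has positive drift, cf.\ Theorem 19, Section VI.4 of \cite{bib:b96}), and nothing in the paper's estimates gives $\sup_{x\ge A}h'(x)<\infty$ without already knowing \eqref{ptt:infinity}. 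A secondary issue: the first assertion is stated under the hypothesis that $p_{t_0}$ is bounded for a \emph{single} $t_0$, whereas \eqref{7764}, Lemma 2 and \eqref{9421} all require the full $(H_1)$--$(H_2)$. The paper avoids both problems by working entirely on the Fourier side: $p_{t_0}\in L^\infty$ gives $e^{-2t_0\mathrm{Re}\Psi}\in L^1$, the region $|\xi|\ge1$ is handled by $\mathrm{Re}\Psi(1)>0$, and near $\xi=0$ the L\'evy--Khintchine formula gives $\mathrm{Re}\Psi(\xi)\ge c\xi^2$, so that $\int_{2t_0}^\infty t^{-1}e^{-ct\xi^2}dt\approx-\ln\xi$ is integrable on $(0,1)$.

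For \eqref{ptt:zero} the gap is the one you yourself flag and then dismiss. The identity $U(u)=\frac{1}{2\pi}\int_{\mathbb{R}}\mathrm{Re}\bigl((u+\Psi(\xi))^{-1}\bigr)d\xi$, and hence the subsequent Tonelli step, requires interchanging the $t$- and $\xi$-integrals in $\int_0^\infty e^{-ut}\bigl(\frac{1}{2\pi}\int e^{-t\Psi(\xi)}d\xi\bigr)dt$; the integrand is complex and not absolutely integrable unless $\int(u+\mathrm{Re}\Psi(\xi))^{-1}d\xi<\infty$. When that condition fails, ``the divergence persists'' is an assertion, not an argument: the case $U(u)=+\infty$ is indeed harmless, but the case $U(u)<\infty$ with the Hawkes integral infinite is exactly the one where your identification is unproven, and Fatou does not apply because $\mathrm{Re}\bigl(e^{-\epsilon(u+\Psi)}(u+\Psi)^{-1}\bigr)$ has no sign. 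The paper's argument is far shorter and avoids Fourier analysis altogether: under $(H_1)$ the function $t\mapsto p_t(0)$ is completely monotone (see \cite{bib:ssv10}, p.~118), hence decreasing, so $\int_0^{t_1}p_t(0)t^{-1}dt\ge p_{t_1}(0)\int_0^{t_1}t^{-1}dt=\infty$, using $p_{t_1}(0)>0$ from \eqref{pt_positivity}. I would recommend replacing your second part by this monotonicity argument and rebuilding the first part on the Fourier representation rather than on the excursion-theoretic identity \eqref{7764}.
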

\begin{proof}
Since boundedness of $x\to p_{t_0}(x)$ implies that $p_{t_0}\in L^2(\mathbb{R})$, its Fourier transform is also in $L^2(\mathbb{R})$ which means that 
$e^{-2t_0\textrm{Re}(\Psi(\cdot))}\in L^1(\mathbb{R})$. 
On the one hand it implies integrability of the characteristic function of $X$ for $t\geq 2t_0$ and, by the Riemann-Lebesgue lemma, continuity of $p_t$ for
$t\geq 2t_0$. On the other hand, applying inverse Fourier transform together with Fubini-Tonelli theorem, we can write
\formula{
  \int_{2t_0}^\infty \frac{p_t(0)}{t}dt \leq \frac{1}{2\pi}\int_{2t_0}^\infty \frac{1}{t}\int_{\mathbb{R}}|e^{-t\Psi(\xi)}|\,d\xi\,dt
= \frac{1}{2\pi} \left(\int_0^1+\int_1^\infty\right)  \int_{2t_0}^\infty\frac{1}{t}e^{-2t\textrm{Re}(\Psi(\xi))}dt\,d\xi\,.
}
By integrability of the  characteristic function of $X_{2t_0}$ and the fact that $\textrm{Re}\Psi(1)>0$, we obtain 
\formula{
\int_1^\infty \int_{2t_0}^\infty \frac{1}{t}e^{-2t\textrm{Re}(\Psi(\xi))}dt\,d\xi \leq \int_1^\infty 
e^{-2t_0\textrm{Re}(\Psi(\xi))}\,d\xi\cdot \int_{2t_0}^\infty \frac{1}{t}e ^{-t\textrm{Re}(\Psi(1))}dt<\infty\/,
}
hence it is enough to show the finiteness of the integral over $(0,1)$. Recall that $2\textrm{Re}\Psi(\xi)$ is the L\'evy-Khintchin exponent of the symmetrization of $X$. 
Thus, by L\'evy-Khintchin formula, there exists a constant $c>0$ such that $\Psi(\xi)\geq c\xi^2$ whenever $\xi \in (0,1)$. Moreover, we have
\formula{
   \int_{2t_0}^\infty \frac{1}{t}e^{-ct\xi^2}dt\approx -\ln \xi\/,\quad \xi \to 0\/.
}
It finally gives
\formula{
\int_0^1 \int_{2t_0}^\infty \frac{1}{t}e^{-2t\textrm{Re}(\Psi(\xi))}dt\,d\xi\leq \int_0^1 \int_{2t_0}^\infty \frac{1}{t}e^{-ct\xi^2}dt\,d\xi<\infty\/,
}
which ends the proof of (\ref{ptt:infinity}). 

To deal with (\ref{ptt:zero}) recall that  $(H_1)$ implies that the function $t\to p_t(0)$ is completely monotone 
(see \cite{bib:ssv10} p.118), so in particular it is decreasing. It entails
\formula{
   \int_0^{t_1}\frac{p_t(0)}{t}\,dt\geq p_{t_1}(0)\int_0^{t_1}\frac{dt}{t} =  \infty.
}
This ends the proof.
\end{proof}

\noindent We are now ready to proceed to the proofs of our main results.\\

\noindent{\it Proof of Theorem $\ref{4883}$}: Let us first note that
\begin{equation}\label{9255}
\lim_{x\to0^{+}}\frac{h(x)}{h'(x)}=0\,.
\end{equation}
Indeed, from (\ref{9421}), Proposition \ref{5012} and the Fatou Lemma, we have
\[\liminf_{x\to0^{+}}\frac{h'(x)}{h(x)}=\liminf_{x\to0^{+}}\frac{1}{h(x)}\int_0^\infty q^*_s(x)\,ds\ge
\int_0^\infty \liminf_{x\to0^{+}} \frac{q^*_s(x)}{h(x)}\,ds=\int_0^\infty\frac{p_s(0)}s\,ds\/,\]
which is infinite from (\ref{ptt:zero}).

Secondly, we note that under $(H_1)$, for every $t>0$ we have
\formula[eq:qt:upper:general]{
   q_t^*(x,y)\leq p_t(x-y) \leq \frac{1}{2\pi}\int_{\mathbb{R}}e^{-2t\textrm{Re}\Psi(\xi)}\,d\xi = p_t^S(0)\/,
}
where the first inequality follows from (\ref{4524}) and where $p_t^S = p_t*p_t$ is the density of the semi-group of the symmetrization of $X$. 
By the upper-bounds given in Theorem 3.1 in \cite{bib:kmr12} we have
\formula[sup:kmr:estimate]{
  \int_0^\infty q_t(x,y)dy = {\p(\overline{X}_{t}\le x)}\leq \frac{e}{e-1}\kappa(1/t,0)h(x)\/.
}
Note that this bound is true for every L\'evy process and that an analogous result holds for the reflected process $\overline{X}-X$. 
Then, applying the Chapman-Kolmogorov equation and  unsing the inequalities (\ref{eq:qt:upper:general}) and (\ref{sup:kmr:estimate}), we obtain
\formula{
q_{3t}^*(x,y) &= \int_0^\infty \int_0^\infty q_t^*(x,z)q_t^*(z,w)q_t^*(w,y)dzdw\\
  &\leq p_t^S(0)\int_0^\infty q_t^*(x,z)dz\int_0^\infty q_t^*(w,y)\,dw\\
  &= p_t^S(0) \int_0^\infty q_t^*(x,z)dz \int_0^\infty q_t(y,w)\,dw\\
  &\leq  \left(\frac{e}{e-1}\right)^2p_t^S(0)h^*(x)h(y)\kappa(1/t,0)\kappa^*(1/t,0)\/.
}
This inequality together with the Wiener-Hopf factorization  $\kappa(1/t,0)\kappa^*(1/t,0)=1/t$ yields 
\formula[qt:upperbound]{
  \frac{q_{3t}^*(x,y)}{h^*(x)h(y)}\leq  \left(\frac{e}{e-1}\right)^2\frac{p_t^S(0)}{t}\,.
}
Taking the limit when $x\to 0$ and using Proposition \ref{5012} we can finally write 
\formula[qth:integrability]{
   \frac{q_t^*(y)}{h(y)}\leq 3 \left(\frac{e}{e-1}\right)^2\frac{p_{t/3}^S(0)}{t}\/,\quad y>0, t>0\/.
}
Similarly, applying Chapman-Kolmogorov equation (\ref{4528}), we can write for $\delta\in (0,s)$
\formula{
  \frac{q_s^*(x)}{h(x)} &= \int_0^\infty \int_0^\infty q_{s-\delta}^*(z)q_{\delta/2}^*(z,w)\frac{q_{\delta/2}^*(w,x)}{h(x)}dz\,dw\\
  &\leq p_{\delta/2}^S(0)\int_0^\infty q_{s-\delta}^*(z)dz\cdot \int_0^\infty \frac{q_{\delta/2}(x,w)}{h(x)}dw\/.
}
Consequently, using (\ref{sup:kmr:estimate}) together with the fact that $\int_0^\infty q_{s-\delta}^*(z)dz=n^*(s-\delta<\zeta)$, we get 
\formula[eq:gsh:estimate2]{
\frac{q_s^*(x)}{h(x)} \leq c_{\delta}n^*(s-\delta<\zeta)\/,\quad x>0\/,
}
where
\formula{
  c_\delta = \frac{e}{e-1} p_{\delta/2}^S(0)\kappa(2/\delta,0)\,.
}
Since $t\to n(t<\zeta)$ is a continuous, nonnegative and decreasing function, it is uniformly continuous on $[t_0/2,\infty)$. 
For every $\varepsilon>0$ we can choose $0<\delta<t_0/2$ such that
\formula{
   n(t-\delta<\zeta)-n(t<\zeta)\leq \varepsilon\/,\quad t\geq t_0\/.
} 
Then, using (\ref{5519}), monotonicity of $t\to n(t<\zeta)$, (\ref{9421}) and (\ref{eq:gsh:estimate2}), we can write for every $t\geq t_0$ that
\formula{
   \frac{f_t(x)}{h'(x)} &= \int_0^\delta n(t-s<\zeta)\frac{q_s^*(x)}{h'(x)}ds + \frac{h(x)}{h'(x)}\int_\delta^t n(t-s<\zeta)\frac{q_s^*(x)}{h(x)}ds + {\tt d}\frac{h(x)}{h'(x)}\frac{q_t^*(x)}{h(x)}\\
   & \leq n(t-\delta<\zeta) + \frac{h(x)}{h'(x)}\left[c_\delta\int_\delta^tn(t-s<\zeta)n^*(s-\delta<\zeta)ds+ {\tt d}c_{t_0/2}n^*(t-t_0/2<\zeta)\right]\/,
   }
where the last term was estimated using (\ref{eq:gsh:estimate2}) with $s:=t$ and $\delta := t_0/2$. The following simple inequality
\formula{
\int_{0}^{t-\delta} n(t-\delta-s<\zeta)n^*(s<\zeta)\,ds \leq \int_{[0,\infty)}f_{t-\delta}(x)dx \le 1\/,
}
which is a consequence of integration of the formula (\ref{5519}) with respect to $x$, the choice of $\delta$ and the monotonicity of $n(\cdot<\zeta)$ give
   \formula{
   \frac{f_t(x)}{h'(x)} & \leq n(t<\zeta)+\varepsilon + \frac{h(x)}{h'(x)}\left[c_\delta+c_{t_0/2}{\tt d}n^*(t_0/2<\zeta)\right]\/,
}
for every $t>t_0$ and $x>0$. 
Consequently, using (\ref{9255}) and the fact that $\varepsilon$ was arbitrary, we have
\formula{
   \limsup_{x\to 0^+} \frac{f_t(x)}{h'(x)}\leq n(t<\zeta)\,,
}
uniformly on $[t_0,\infty)$. For the lower bound, note that monotonicity of $t\to n(t<\zeta)$ and (\ref{5519}) give
\formula{
  \frac{f_t(x)}{h'(x)}\geq n(t<\zeta)\int_0^t\frac{q_s^*(x)}{h'(x)}ds\geq n(t<\zeta)-n(t<\zeta)\frac{h(x)}{h'(x)}\int_t^\infty \frac{q_s^*(x)}{h(x)}ds\/.
}
Since from (\ref{qth:integrability}) we have
\formula{
  n(t<\zeta)\int_t^\infty \frac{q_s^*(x)}{h(x)}ds\leq \left(\frac{e}{e-1}\right)^2 n(t_0<\zeta)\int_{3t_0}^\infty \frac{p_t^S(0)}{t}dt \/,\quad t\geq t_0\/.
}
Note that boundedness of $x\to p_t(x)$ implies boundedness of $p_t^S$ (since $p_t^S$ is a convolution of a function from $L^1(\mathbb{R})$ and a bounded function) 
and consequently, by (\ref{ptt:infinity}) and (\ref{9255}) we finally obtain
\formula{
   \liminf_{x\to 0^+}\frac{f_t(x)}{h'(x)}\geq n(t<\zeta)\/,\quad \textrm{uniformly for }t\geq t_0\/.
}
This ends the proof.
$\Box$\\

\noindent{\it Proof of Theorem $\ref{thm:t:asymptotic}$}.
 Let $A$ be any compact subset of $(0,\infty)$. Since $t\mapsto n(t<\zeta)$ is regularly varying at infinity, we have
   \formula[eq:nt:compare]{
      \frac1t\int_0^t n(s<\zeta)ds \approx n(t<\zeta)\/,\quad t\to \infty. 
   }
Here $ f(t) \approx g(t),t\to\infty$ means that there exists constant $c>1$ such that $c^{-1} g(t)\leq f(t)\leq c g(t)$ for large $t$.
Then let us split formula (\ref{5519}) into two parts by writing $f_t^{1}(x)$ for the integral component and $f_t^2(x):={\tt d}q_t^*(x)$.
  Thus, for every fixed $\delta\in(0,1)$, by monotonicity of $n(\cdot<\zeta)$ and (\ref{qth:integrability}) we have
  \formula{
     f_t^1(x) &= \left(\int_0^{(1-\delta) t}+\int_{(1-\delta) t}^t \right)n(s<\zeta)q_{t-s}^*(x)\,ds\\
     &\leq 3\left(\frac{e}{e-1}\right)^2h(x)\int_0^{(1-\delta) t}n(s<\zeta)\frac{p_{(t-s)/3}^S(0)}{t-s}\,ds + n((1-\delta) t<\zeta)\int_0^\infty q_s^*(x)\,ds\/.
  } 
  Since $t\to p_t^S(0)$ is decreasing (by ($H_1$)), we can write
  \formula{
    \frac{f_t^1(x)}{n(t<\zeta)}\leq \frac{n((1-\delta) t<\zeta)}{n(t<\zeta)}h'(x)+ 
    3\left(\frac{e}{e-1}\right)^2\frac{h(x)p_{\delta t/3}^S(0)}{\delta}\frac{1}{t n(t<\zeta)}\int_0^t n(s<\zeta)\,ds\/.
  }
  Finally, using (\ref{eq:nt:compare}) and the facts that $\lim_{t\to\infty}p_t^S(0)=0$ and $h(x)$ is bounded on $(0,x_0]$, we obtain
  \formula{
    \limsup_{t\to\infty} \frac{f_t^1(x)}{n(t<\zeta)} = (1-\delta)^{-\rho} h'(x)\/.
  }
  Since $\delta$ was arbitrary and $h'$ is bounded on $A$, we get 
  \formula{
    \limsup_{t\to\infty} \frac{f_t^1(x)}{n(t<\zeta)} =  h'(x)\,,
  }
  uniformly in $x\in A$. To deal with $f_t^2(x)$ we use (\ref{eq:gsh:estimate2}) to get
  \formula{
     \frac{f_t^2(x)}{n(t<\zeta)} \leq {\tt d} \frac{e}{e-1}\,p_{t/4}^S(0)\kappa(4/t,0)\,h(x)\,\frac{n(t/2<\zeta)}{n(t<\zeta)}\/.
  }
  Because $h(x)$ is bounded on $A$, 
  \formula{
    \lim_{t\to \infty}\frac{n(t/2<\zeta)}{n(t<\zeta)} = 2^\rho
  }
  and $\lim_{t\to \infty}p_{t/4}^S(0)\kappa(4/t,0) = 0$, we obtain
  \formula{
     \limsup_{t\to\infty} \frac{f_t^2(x)}{n(t<\zeta)} = 0\,,
  }
  uniformly on $A$. Moreover, we have
  \formula{
    \frac{f_t(x)}{n(t<\zeta)}\geq \int_0^tq_s^*(x)ds = h'(x)-\int_t^\infty q_s^{*}(x)\,ds\/,
  }
  where, for $x\in A$, we can write
  \formula{
    \int_t^\infty q_s^{*}(x)\leq 3\left(\frac{e}{e-1}\right)^2 h(x)\int_t^\infty \frac{p_{s/3}^{S}(0)}{s}\,ds<\left(\frac{e}{e-1}\right)^2\sup_{x\in A}h(x)\int_{3t}^\infty \frac{p_{s}^{S}(0)}{s}\,ds\/.
  }
  The last integral goes to zero, when $t$ goes to infinity and consequently, 
  \formula{
     \liminf_{t\to\infty}\frac{f_t(x)}{n(t<\zeta)} = h'(x)\,,
  }
  uniformly in $x\in A$. This ends the proof.
  $\Box$\\
  
\noindent{\it Proof of Theorem $\ref{thm:t:estimates}$}. 
Note that the function 
\formula{
   g(x,t) := \frac{h(x)}{h'(x)}\int_t^{\infty}\frac{q_s^*(x)}{h(x)}\,ds<1
}
is a nonnegative function on $(0,x_0]\times[t_0,\infty]$ such that
\formula{
   c(x_0,t_0) := \sup_{x\leq x_0,t\geq t_0} g(x,t)<1.
}
Since, by (\ref{ptt:infinity}) and (\ref{9255}), the function $g(x,t)$ vanishes when $x$ is small or $t$ is large and $g(x,t)\leq g(x,t_0)$, for $t\geq t_0$, it is enough 
to show that for every $0<a<b$,
\formula{
   \sup_{x\in[a,b]} \frac{1}{h'(x)}\int_{t_0}^\infty g_s^*(x)\,ds <1\/.
}
If the above-given supremum was equal to $1$, then we could choose a sequence of points $(x_n)\in[a,b]$ such that $\lim_{n}x_n=x_0$ and $\lim_{n}g(x_n,t_0)=1$. 
Since, by continuity of $g_s^*(x)$ (see \cite{bib:l12}) together with (\ref{qth:integrability}) and (\ref{ptt:infinity}), the above-given integral is continuous in $x$ we 
would get
\formula{
  \int_{t_0}^\infty g_s^*(x_0)\,ds &= \lim_{n}h'(x_n) = \lim_n\int_0^\infty q_s^*(x_n)\,ds\geq \int_0^\infty \liminf_n q_s^*(x_n)\,ds = \int_0^\infty q_s^*(x_0)\,ds\/.
}
Here we have used the Fatou Lemma and once again continuity of $q_s^*(x)$. Since $q_s^*(x_0)$ is strictly positive this is a contradiction. 

By monotonicity of $n(\cdot<\zeta)$ we can write
\formula{
  f_t(x) &\geq \int_0^t n(s<\zeta)q_{t-s}^*(x)\,ds\geq n(t<\zeta)\int_0^t q_s^*(x)\,ds\\
    &= n(t<\zeta)h'(x)\left(1-\frac{h(x)}{h'(x)}\int_t^{\infty}\frac{q_s^*(x)}{h(x)}\right) \geq c_1  n(t<\zeta)h'(x)
}
whenever $x\leq x_0,t\geq t_0$. Here $c_1 = 1-c(x_0,t_0)>0$.
To deal with the upper-bounds we use the fact that 
\formula{
  n(t<\zeta)\leq \frac{1}{t}\int_0^t n(s<\zeta)\,ds\/.
}
This together with (\ref{qth:integrability}) enable us to write for every $t\geq t_0$ and $x\leq x_0$,
\begin{eqnarray}
  f_t^1(x) &=& \left(\int_0^{t/2}+\int_{t/2}^t\right)n(s<\zeta)q_s^*(x)\,ds\nonumber \\
  &&\leq 3\frac{p_{t/6}(0)}{t}h(x)\int_0^{t/2}n(s<\zeta)\,ds+n(t/2<\zeta)\int_{t/2}^t q_s^*(x)\,ds\nonumber\\
  &&\leq 3{p_{t_0/6}(0)} \frac{h(x)}{h'(x)} h'(x)\frac1t \int_0^t n(s<\zeta)\,ds+\frac{2h'(x)}{t}\int_0^t n(s<\zeta)\,ds\/.\label{5458}
\end{eqnarray}
We deal with the second part $f_t^2(x)$ similarly as in Theorem \ref{thm:t:asymptotic}. We have
\begin{eqnarray}
   f_t^2(x)&\leq& {\tt d}h'(x)\frac{h(x)}{h'(x)} c_{t_0/2}n(t-t_0/2)\nonumber\\
   &\leq& {\tt d} h'(x)\sup_{x\leq x_0}\frac{h(x)}{h'(x)}c_{t_0/2}\frac{1}{t-t_0/2}\int_0^{t-t_0/2}n(s<\zeta)\,ds\nonumber\\
   &\leq& {\tt d} h'(x)\sup_{x\leq x_0}\frac{h(x)}{h'(x)}c_{t_0/2}\frac{2}{t}\int_0^t n(s<\zeta)\,ds\,.\label{5158}
\end{eqnarray}
Inequalities (\ref{5458}) and (\ref{5158}) prove the upper-bounds $f_t\le c_2 n(t<\zeta)h'(x)$, with 
\formula{
  c_2 = 3{p_{t_0/6}(0)} \sup_{x\leq x_0}\frac{h(x)}{h'(x)}+2+2\sup_{x\leq x_0}\frac{h(x)}{h'(x)}c_{t_0/2}\/.
}
The second part of the thesis follows from the first one and (\ref{eq:nt:compare}).
$\Box$\\

\noindent{\it Proof of Proposition $\ref{thm:continuity}$}. 
{$(1)\Rightarrow (2)$} Let $x_0$ be a point of continuity of $h'$. Fix $t>0$ and take $\varepsilon >0$. Since $q_s^*(x)$ 
is continuous, it is enough to show that the integral part $f_t^1(x):=\int_0^t n(t-s<\zeta)q_s^*(x)\,ds$ of (\ref{5519}) is continuous in $x$ at $x_0$. 
Moreover, for every $t_0<t$ we can write
\formula{
 f_t^1(x) = \left(\int_0^{t_0}+\int_{t_0}^t\right)n(t-s<\zeta)q_s^*(x)ds:= k_{t_0}^1(x)+k_{t_0}^2(x)\/.
}
Using the Lebesgue dominated convergence theorem and (\ref{qth:integrability}) we can easily show that $x\to k_{t_0}^2(x)$ is continuous on $(0,\infty)$ for every 
choice of $t_0<t$. Moreover, the same arguments give continuity of the function $x\to \int_{t_0}^\infty q_s^*(x)\,ds$ for every positive $t_0$. We choose $t_0<t/2$ 
such that 
\formula{
  \int_0^{t_0}q_s^*(x_0)\,ds<\frac{\varepsilon}{4n(t/2<\zeta)}\/,
}
where existence of such $t_0$ follows from integrability of $q_s^*(x_0)$ in $s$ at $0$. Since $x\to h'(x)$ is continuous at $x_0$ and the function 
$x\to \int_{t_0}^\infty q_s^*(x)\,ds$ is continuous on $(0,\infty)$, we can choose $\delta>0$ such that for every $|x-x_0|<\delta$,
\formula{
   \int_0^{t_0}q_s^*(x)\,ds<\frac{\varepsilon}{2n(t/2<\zeta)}\/.
}
Writing for $|x-x_0|<\delta$,
\formula{
|f_t^1(x)-f_t^1(x_0)|&\leq n(t-t_0<\zeta)\left(\int_0^{t_0}q_s^*(x)ds+\int_0^{t_0}q_s^*(x_0)ds\right)+\left|k_{t_0}^2(x)-k_{t_0}^2(x_0)\right|\\
&\leq \varepsilon + \left|k_{t_0}^2(x)-k_{t_0}^2(x_0)\right|
}
and taking a limit, when $x\to x_0$ ends the proof in this case.

Since $(3)$ follows directly from $(2)$, it is enough to show $(3) \Rightarrow (1)$. Assume that for some $t>0$ the function $x\to f_t(x)$ is continuous at $x_0$. 
We choose $t_0>0$ such that 
\formula{
  \int_0^{t_0} n(t-s<\zeta)q_s^*(x_0)ds<\varepsilon n(t<\zeta)/4\,,
}
for a given $\varepsilon>0$. Our assumption implies that $x\to k_{t_0}^1(x)$ is continuous at $x_0$ and consequently, we can choose $\delta>0$ such that
\formula{
  \int_0^{t_0} n(t-s<\zeta)q_s^*(x)ds<\varepsilon n(t<\zeta)/2\/,
}
whenever $|x-x_0|<\delta$. Monotonicity of $n(\cdot<\zeta)$ entails,
\formula{
   |h'(x)-h'(x_0)|&\leq \left(\int_0^{t_0}q_s^*(x)ds+\int_0^{t_0}q_s^*(x_0)ds\right)+\left|\int_{t_0}^\infty q_s^*(x)ds-\int_{t_0}^\infty q_s^*(x_0)ds\right|\\
   &\leq \varepsilon+\left|\int_{t_0}^\infty q_s^*(x)ds-\int_{t_0}^\infty q_s^*(x_0)ds\right|\/,
}
whenever $|x-x_0|<\delta$. Since the function $x\to \int_{t_0}^\infty q_s^*(x)ds$ is continuous, the proof is complete.
$\;\;\Box$\\

\subsection*{Acknowledgments}
We would like to thank V\'{\i}ctor Rivero and Ren\'e Schilling for helpful  discussions on the subject of the article and valuable suggestions.

\newpage

\end{document}